\theoremstyle{plain}
\newtheorem{thm}{Theorem}[section]
\newtheorem{cor}{Corollary}[section]
\newtheorem{lem}{Lemma}[section]
\newtheorem{prop}{Proposition}[section]
\newtheorem{exam}{Example}[section]
\theoremstyle{definition}
\newtheorem{defn}{Definition}[section]
\newtheorem{rmk}{Remark}[section]
\newcommand{\eps}{\varepsilon}
\DeclareMathOperator*{\esssup}{ess\,sup}
\DeclareMathOperator*{\essinf}{ess\,inf}
\newcommand{\cA}{\mathcal{A}}
\newcommand{\cB}{\mathcal{B}}
\newcommand{\cG}{\mathcal{G}}
\newcommand{\cH}{\mathcal{H}}
\newcommand{\cL}{\mathcal{L}}
\newcommand{\cS}{\mathcal{S}}
\newcommand{\bP}{\mathbb{P}}
\newcommand{\bR}{\mathbb{R}}
\newcommand{\bN}{\mathbb{N}}
\newcommand{\sF}{\mathscr{F}}
\newcommand{\sP}{\mathscr{P}}
\newcommand{\tbf}{\textbf}
\newcommand{\llangle}{\left\langle}
\newcommand{\rrangle}{\right\rangle}
\makeatletter\@addtoreset{equation}{section} \makeatother
\begin{document}

\begin{frontmatter}

\title{Controlled Reflected SDEs and Neumann Problem for Backward SPDEs}
\runtitle{Neumann Problem for Backward SPDEs}

\begin{aug}
  \author{\fnms{Erhan}  \snm{ Bayraktar}\corref{}\thanksref{t1}\ead[label=e1]{erhan@umich.edu}},
  \author{\fnms{Jinniao} \snm{ Qiu} \thanksref{t2}  \ead[label=e2]{jinniao.qiu@ucalgary.ca}}
 
  \thankstext{t1}{Supported in part by the National Science Foundation under grant DMS-1613170 and the Susan
M. Smith Professorship. }

 \thankstext{t2}{Partially supported by the National Science and Engineering Research Council of Canada and by the start-up funds from the University of Calgary.}

  \runauthor{Bayraktar and Qiu}

  \affiliation{University of Michigan and University of Calgary}

  \address{530 Church Street, Ann Arbor, \\ MI  48109-1109, USA\\ 
          \printead{e1}}

\address{2500 University Drive NW, Calgary, \\ AB T2N 1N4, Canada.\\ 
          \printead{e2}}

 \end{aug}


\begin{abstract}
  We solve the optimal control problem of a one-dimensional reflected stochastic differential equation, whose coefficients can be path dependent. The value function of this problem is characterized by a backward stochastic partial differential equation (BSPDE) with Neumann boundary conditions. We prove the existence and uniqueness of a sufficiently regular solution for this BSPDE, which is then used to construct the optimal feedback control. In fact we prove a more general result: The existence and uniqueness of strong solution for the Neumann problem for general nonlinear BSPDEs, which might be of interest even out of the current context.

\end{abstract}

\kwd[Primary ]{60K35}
\kwd{93E20, 60H15, 91G80}
\end{keyword}

\begin{keyword}
\kwd{optimal control of reflected stochastic differential equations, Neumann problem, backward stochastic partial differential equation.}
\end{keyword}

\end{frontmatter}

\renewcommand{\baselinestretch}{1.1}\normalsize

\section{Introduction}

Let $T\in(0,\infty)$ and  $(\Omega,\bar{\sF},\bP)$ be a probability space equipped with a filtration $\{\bar{\sF}_t\}_{0 \leq t \leq T}$ which satisfies the usual conditions.  The filtration $\bar{\sF}$ is generated by two independent $m$-dimensional Brownian motions $W$ and $B$.  We denote by  $\{\sF_t\}_{t\geq 0}$ the filtration generated by $W$, together with all $\bP$ null sets. The predictable $\sigma$-algebra on $\Omega\times[0,+\infty)$ corresponding to $\{\sF_t\}_{t\geq0}$ and $\{\bar{\sF}_t\}_{t\geq0}$ is denoted $\sP$, respectively, $\bar{\sP}$. 

In this paper, we consider the following stochastic optimal control problem:
\begin{align}
  \min_{\theta}
  E\left[\int_0^T\!\! f_t(X_t,\theta_t)\,dt
  +\int_0^T\!\! g_t(X_t)\,dL_t 
  +\int_0^T\!\! g_t(X_t)\,dU_t
  +G(X_T)
  \right]
  \label{min-contrl-probm}
\end{align}
subject to
\begin{equation}\label{state-proces-contrl}
\left\{
\begin{split}
&dX_t=  \beta_t(X_t,\theta_t)\,dt+\sigma_t(X_t)\,dW_t+\bar\sigma_t(X_t)\,dB_t\\
&\quad\quad\quad +dL_t-dU_t,    \quad  t\in[0,T] ;\\
&X_0=x; \quad L_0=U_0=0;\\
&0\leq X_t\leq b, \quad \text{a.s.};\\
&\int_0^T\!\! X_t\,dL_t =\int_0^T(b-X_s)\,dU_s=0,\quad \text{a.s.,}
\end{split}
\right.
\end{equation}
where $L$ and $U$ are two non-decreasing processes. The real-valued process $(X_t)_{t\in[0,T]}$ is the {\sl state process}. Its drift is governed by the {\sl control} $\theta$.  
  We sometimes write $X^{s,x;\theta}_t$ for $0\leq s\leq t\leq T$ to indicate the dependence of the state process on the control $\theta$, the initial time $s$ and initial state $x\in \mathbb{R}$. The set of {\sl admissible controls} consists of all $\bar\sF_t$-adapted processes $\theta $ such that the reflected stochastic differential equation (SDE) \eqref{state-proces-contrl} admits a unique solution and $\theta_t\in\Theta$ a.s for each $t\in[0,T]$ with set $\Theta\subset \bR^n$.

Classical stochastic control problems, see e.g. \cite{Fleming-Rishel-1975,Flem-Soner-2006controlled,kushner1971introduction}, have been generalized more recently to handle the path dependent case \cite{Ekren-Touzi-Zhang-2012,peng1990general,pengwu1999fully}. We will in addition consider the problem of controlling reflected path dependent SDEs. The analysis of such control problems is motivated by the drift rate controlled queueing problem in \cite{ata2005drift}, where the control problem is of ergodic/stationary type and is concerned with minimizing the long-run average cost under the Markovian framework. In contrast to that set-up, the coefficients in \eqref{min-contrl-probm} and \eqref{state-proces-contrl} are allowed to be random and thus can be non-Markovian; more precisely, we assume: 
      \[
   \begin{split}
&(\mathcal{A} 0) \quad \text{The coefficients } \beta,\,\sigma,\,\bar\sigma,\, f,\, g \text{ are } \sP\otimes\cB(\bR)\otimes\cB(\bR^n)\text{-measurable and } \\
& G\text{ is } 
\sF_T\otimes\cB(\bR)\text{-measurable.}
\end{split}
\]
We would also note that, as stated in \cite{ata2005drift}, because the reflecting barriers are not discretionary and only the drift rate is controlled, the control problem does not fall in the spectrum of ``singular" stochastic control. To the best of our knowledge, ours is the first analysis of the controlled reflected SDEs with random coefficients.

   Let us denote the dynamic version of the cost by
\begin{equation}\label{cost0}
\begin{split}
  J_t(X_t;\theta)=
  E \bigg[
  \int_t^T\!\! f_s(X_s,\theta_s)\,ds
  +\int_t^T\!\! g_s(X_s)\,dL_s\\
  +\int_t^T\!\! g_s(X_s)\,dU_s
  +G(X_T)\Big| \bar{\sF}_t
    \bigg],
    \end{split}
\end{equation}
and denote
\begin{equation}\label{value-func}
  u_t(x){\triangleq}\essinf_{\theta} J_t(X_t;\theta)\big|_{X_t=x}.
\end{equation}
In view of Peng's seminal work \cite{Peng_92} on non-Markovian stochastic optimal control, the dynamic programming principle suggests that the value function $u$ is the first component of the pair $(u,\psi)$ satisfying formally the following Neumann problem for backward stochastic partial differential equation (BSPDE):
\begin{equation}\label{BSPDE-NP}
  \left\{\begin{array}{l}
  \begin{aligned}
  -du_t(x)=\, &\bigg[ \frac{1}{2}\left( | \sigma_t(x)|^2+|\bar\sigma_t(x)|^2\right)D^2 u_t(x)
        + \sigma D \psi_t(x)
        \\
        &\quad+{\mathbb H} _t(x,Du_t(x))
         \bigg]\, dt  
       \  -\psi_t(x)\, dW_{t}, \\
        &\quad\quad\quad\quad
        \quad (t,x)\in [0,T]\times [0,b];\\
        Du_t(0)=\,&g_t(0),\quad Du_t(b)=g_t(b);\\
    u_T(x)=\,& G(x),  \quad x\in[0,b],
    \end{aligned}
  \end{array}\right.
\end{equation}
with Hamiltonian function
\begin{align}
{\mathbb H} _t(x,Du_t(x))\triangleq \essinf_{\theta\in\Theta}\left\{
        \beta_t(x,\theta)D u_t(x)+f_t(x,\theta)\right\},
        \label{eq-hamiltonian}
\end{align}
for $(t,x)\in [0,T]\times [0,b]$.

First, the self-contained proofs for the existence and uniqueness of strong solution are given for the Neumann problem of general nonlinear BSPDEs. \footnote{It is worth noting that, unlike Dirichlet problems for BSPDEs (see \cite{QiuTangMPBSPDE11}) or Neumann problems for deterministic PDEs (see \cite{Lieberman}), It\^o's formula for the square norm is not well-defined for the weak solutions of the Neumann problems for BSPDEs with a nontrivial coefficient $\sigma$  and this makes the existing methods for weak solutions inapplicable here (Remark \ref{rmk-linear}).} Then the existence and uniqueness of strong solution to \eqref{BSPDE-NP} follows immediately. However, to verify that the obtained solution is the value function and to derive the optimal feedback control for problem  \eqref{min-contrl-probm}-\eqref{state-proces-contrl}, we need to make sense of the composition of the solution of \eqref{BSPDE-NP} and the controlled state process $X$, and this requires improved regularity of $u$. Inspired by the smoothing properties of the leading operators of BSPDEs (see \cite{Qiu-2014-Hormander}), we assume that  $\bar{\sigma}$ satisfies the {\sl super-parabolicity condition:} 
   \begin{align*}
     &  (\mathcal{A} 1)\quad \text{There exists constant }\kappa, \text{ s.t. } \left| \bar\sigma_t(x) \right|^2 \geq \kappa>0 \quad \text{a.s.,}  \\ & \,\,\forall\, (t,x)\in [0,T]\times\bR.
   \end{align*}
  By $(\cA 0)$, the randomness of all the coefficients for each fixed time and state is only subject to the sub-filtration $\{\sF_t\}_{t\geq 0}$ generated by Wiener process $W$, which allows our set-up to go beyond the classical Markovian  framework and furthermore, together with $(\cA 1)$, ensures the super-parabolicity and thus smoothing property of the involved differential operator in BSPDE \eqref{BSPDE-NP}; refer to \cite{Qiu-2014-Hormander} for more detailed discussions\footnote{In fact, according to the investigations of \cite{Qiu-2014-Hormander}, the randomness subject to sub-filtration $\{\sF_t\}_{t\geq 0}$ may damage the regularity of solutions, while the terms associated with Wiener process $B$, seen as the Markovian part, serve to restore the smoothing property. A sufficiently regular solution is needed for the verification theorem as well as for the construction of the optimal control. Therefore, we introduce two independent Wiener processes $W$ and $B$ and assume the super-parabolicity $(\mathcal{A} 1)$.}.  Then, we take spatial derivatives on both sides of \eqref{BSPDE-NP}. The resulting Dirichlet problem admits a unique strong solution (see \cite{DuTang2010}), which yields additional regularity of $Du$. Finally, the generalized It\^o-Kunita-Wentzell formula, applicable to the sufficiently regular random field $u_t(x)$, allows us to finish the verification.

The \emph{nonlinear} BSPDE like \eqref{BSPDE-NP} is called stochastic Hamilton-Jacobi-Bellman (HJB) equation, which was first introduced by Peng \cite{Peng_92} for controlled SDEs without reflection. For the utility maximization with habit formation, a specific fully nonlinear stochastic HJB equation was formulated by Englezos and Karatzas \cite{EnglezosKaratzas09} and the value function was verified to be its classical solution. The study of \emph{linear} BSPDEs, on the other hand,  dates back to  about thirty years ago (see Bensoussan
\cite{Bensousan_83} and Pardoux \cite{Pardoux1979}). They arise in many applications of probability theory and stochastic processes, for
instance in the nonlinear filtering and stochastic control theory for processes with incomplete information, as an adjoint equation of the Duncan-Mortensen-Zakai filtering equation (for instance, see \cite{Bensousan_83,Hu_Ma_Yong02,Hu_Peng_91,Zhou_93}). The representation relationship between forward-backward stochastic differential equations and BSPDEs yields the stochastic Feynman-Kac formula (see \cite{Hu_Ma_Yong02,ma1999linear,QiuTangYou-SPA-2011}). In addition, as the obstacle problems of BSPDEs, the reflected BSPDE arises as the HJB equation for the optimal stopping problems (see \cite{ChangPangYong-2009,Oksend-Sulem-Zhang-2011,QiuWei-RBSPDE-2013}).

The linear and semilinear BSPDEs have been extensively studied, we refer to \cite{DuTang2010,Hu_Ma_Yong02,ma2012non,ma1999linear,Tang-Wei-2013} among many others. For the weak solutions and associated local behavior analysis for general quasi-linear BSPDEs, see \cite{QiuTangBDSDES2010,QiuTangMPBSPDE11}, and we refer to \cite{GraeweHorstQui13} for BSPDEs with singular terminal conditions. However,  the existing literature is mainly about the BSPDEs in the whole space and Dirichlet problem, and not on the Neumann problem, though some partial results could be concluded from the semigroup method of BSPDEs \cite{Hu_Peng_91,Tessitore_96} for the cases when $\sigma\equiv 0$.

The remainder of this paper is organized as follows. In section 2, we summarize the main assumptions and results. The existence and uniqueness of strong solution for the Neumann problem of general nonlinear BSPDEs is established in Section 3, where we first give the a priori estimates of strong solutions for linear equations and then use the continuity method to prove the well-posedness for the general nonlinear cases. In Section 4,  we complete the proof of the main theorem. Finally, the appendix recalls an It\^o formula for the square norms of solutions of SPDEs and provides the sketched proof for a generalized It\^o-Kunita-Wentzell formula.

\section{Preliminaries and Main Result}


\subsection{Notations and definition of solutions to BSPDEs}

In this paper, we use the following notation. $D$ and $D^2$ {denote} the first {order} and second order spatial partial derivative {operators}, respectively; the other partial derivatives are denoted by $\partial$.
For a Banach space $V$, the space $L^2(\Omega,\sF_T;V)$ is the set of all $V$-valued $\sF_T$-measurable and square-integrable random variables, and   we denote by $\cS^p_{\sF} ([0,T];V)$, $p\in[1,\infty)$, the set of all the $V$-valued and $\sP$-measurable c\`adl\`ag processes $(X_{t})_{t\in [0,T]}$ such that
\[
	\|X\|_{\cS _{\sF}^p([0,T];V)}^p= E \sup_{t\in [0,T]} \|X_t\|_V^p< \infty.
\]
By $\cL^p_{\sF}(0,T;V)$ we denote the class of $V$-valued $\sP$-measurable processes $(u_t)_{t\in[0,T]}$ such that
\begin{align*}
\|u\|^p_{\cL^p_{\sF}(0,T;V)} &=E \int_0^T\|u_t\|^p_{V}\,dt<\infty, \quad p\in [1,\infty);\\
 \|u\|_{\cL^{\infty}_{\sF}(0,T;V)}&= \esssup_{(\omega,t)\in\Omega\times[0,T]}\|u_t\|_{V}<\infty, \quad p=\infty.
\end{align*}
In a similarly way, we define $\cS _{\bar{\sF}}^p([0,T];V)$ and $\cL^p_{\bar{\sF}}(0,T;V)$. For the two spaces $\cS_{\sF}^2([0,T];V)$ and $\cL_{\sF}^2(0,T;V)$, we omit the subscript for simplicity, especially when there is no confusion on the filtration and adaptedness.

For $k \in \mathbb{N}^+$ and $p\in [1,\infty)$, $H^{k,p}([0,b])$ is the Sobolev space of all real-valued functions $\phi$ whose up-to $k$th order derivatives belong to $L^p([0,b])$, equipped with the usual Sobolev norm $\|\phi\|_{H^{k,p}([0,b])}$. By $H^{k,p}_0([0,b])$, we denote the space of all the trace-zero functions in $H^{k,p}([0,b])$.
For $k=0$, $H^{0,p}([0,b])\triangleq L^p([0,b])$. For simplicity, by $u=(u_1,\dots,u_l)\in H^{k,p}([0,b])$, we mean $u_1,\dots, u_l\in H^{k,p}([0,b])$  and $\|u\|^p_{H^{k,p}([0,b])}=\sum_{j=1}^l \|u_j\|^p_{H^{k,p}([0,b])}$. We use $\|\cdot\|$ and $\langle \cdot,\,\cdot\rangle$ to denote the norm and the inner product in the usual Hilbert spaces $L^2([0,b])$, and if there is no confusion, we shall also use $\langle\cdot,\, \cdot \rangle$ to denote the duality between Hilbert space $H^{k,2}([0,b])$ and their dual spaces.

Throughout this paper, we shall use $C$ to denote a constant whose value may vary from line to line and we set for $k=1,\,2$
\begin{align*}
\cH=& \cS^2_{\sF}(0,T;L^2([0,b]))\cap  \cL^2_{\sF}(0,T;H^{1,2}([0,b])) \times  \cL^2_{\sF}(0,T;L^2([0,b])),\\
\cH^k=& \cS^2_{\sF}(0,T;H^{k,2}([0,b]))\cap  \cL^2_{\sF}(0,T;H^{k+1,2}([0,b])) \times  \cL^2_{\sF}(0,T;H^{k,2}([0,b])),
\end{align*}
and they are complete spaces equipped respectively with the norms
\begin{align*}
&\|(u,\psi)\|^2_{\cH}\\
&= \|u\|^2_{\cS^2_{\sF}(0,T;L^2([0,b]))} + \|u\|^2_{ \cL^2_{\sF}(0,T;H^{1,2}([0,b]))} +\|\psi\|^2_{\cL^2_{\sF}(0,T;L^2([0,b]))}, \\& \quad\quad \text{for }(u,\psi)\in\cH,\\
&\|(u,\psi)\|^2_{\cH^k}
\\
&= \|u\|^2_{\cS^2_{\sF}(0,T;H^{k,2}([0,b]))} + \|u\|^2_{ \cL^2_{\sF}(0,T;H^{k+1,2}([0,b]))} 
 +\|\psi\|^2_{\cL^2_{\sF}(0,T;H^{k,2}([0,b]))},\\&  \quad \quad \text{for }(u,\psi)\in\cH^k.
\end{align*}

%
Finally, we introduce the notion of solutions to BSPDEs with general nonlinear coefficients which are not restricted to the forms of BSPDE \eqref{BSPDE-NP}. 

\begin{defn}\label{defn-solution}
Let $G\in L^2(\Omega,\sF_T;L^2([0,b]))$ and  $F$ be a random function such that for any $x,x^1,x^2\in\bR$ and any $z,z^1\in\bR^m$
$$
F_{\cdot}(\cdot,x,x^1,x^2,z,z^1):\ \Omega\times[0,T]\times[0,b]\ \rightarrow \bR
$$
 is $\sP\otimes\mathcal{B}([0,b])$-measurable.
 A pair of processes $(u,\psi)$ is a {\sl weak} solution to the BSPDE
  \begin{equation}\label{BSPDE-defn}
  \left\{\begin{array}{l}
  \begin{aligned}
  -du_t(y)=\, &F_t(y,u,Du,D^2u,\psi,D \psi)\, dt
  -\psi_t(y)\, dW_{t}, \\
  &\quad (t,y)\in [0,T]\times [0,b];\\
  Du_t(0)=\,&g_t(0), \quad Du(b)=g_t(b);   \\
    u_T(y)=\,& G(y),  \quad y\in[0,b],
    \end{aligned}
  \end{array}\right.
\end{equation}
if $(u,\psi)\in \cH$  with the traces of $Du(t,\cdot)$ coinciding with $g_t(0)$ and $g_t(b)$ at the boundary,
and $(u,\psi)$ satisfies BSPDE \eqref{BSPDE-defn} in the weak sense, i.e., for any $\varphi\in C_c^{\infty}((0,b))$, 
$$\left\langle \varphi,\,F_{\cdot}(\cdot,u,Du,D^2u,\psi,D \psi)\right\rangle \in \cL^1_{\sF}(0,T;\bR),$$ and
  \begin{align}\label{eq-defn-2}
    \langle \varphi,\, u_t\rangle
    &= \langle\varphi,\, G\rangle+\!
    \int_t^T\!\!\!\langle \varphi,\, F_s(\cdot,u,Du,D^2u,\psi,D \psi)\rangle\,ds \nonumber \\
    &\quad\quad\quad -\!\int_t^T\!\!\langle\varphi,\,\psi_sdW_s\rangle\, \, {\text{a.s.},} \quad t \in [0,T].
  \end{align}
  The above $(u,\psi)$ is called a {\sl strong} solution if we have improved regularity $(u,\psi)\in \cH^1 $.
\end{defn}

It is easy to see that in BSPDE \eqref{BSPDE-NP} we have a particular case of nonlinear term $F$ with 
\[
\begin{split}
&F_t(y,u,Du,D^2u,\psi,D\psi)
\\
&=  \frac{1}{2}\left( | \sigma_t(y)|^2+|\bar\sigma_t(y)|^2\right)D^2 u_t(y)
         + \sigma_t(y) D \psi_t(y)+{\mathbb H} _t(y,Du_t(y)).
      \end{split}  
        \]

\subsection{Assumptions and main result}

For the well-posedness of BSPDE \eqref{BSPDE-NP}, we use the following assumptions. 
\begin{enumerate}[$({\mathcal A} 2)$]
	\item  
The functions $\sigma$, $\bar{\sigma}$ and their spatial partial derivatives $D\sigma$, $D\bar{\sigma}$
are $\sP\otimes \mathscr{B}(\bR)$-measurable and essentially bounded by a positive constant $K>0$. And the functions $\beta$, $f$ and the  spatial partial derivative $D\beta$ are $\sP\otimes \mathscr{B}(\bR) \otimes \mathscr{B}(\bR^n)$-measurable with $\beta_{\cdot}(0,\theta)\in \cL^2_{\sF}(0,T;\bR)$ and $\left|D\beta_t(x,\theta)\right|\leq \Lambda$ a.s. for any $(t,x,\theta)\in [0,T]\times\bR\times\bR^n$.
\end{enumerate}
\begin{enumerate}[$({\mathcal A^*}) $]
	  	\item (i)
	  $$G\in L^2(\Omega,\sF_T; H^{2,2}([0,b])), \quad DG-g_T\in  L^2(\Omega,\sF_T; H_0^{1,2}([0,b])), $$ and together with another function $\cG $, the pair $(g,\cG)$ belongs to $\cH^1$ and satisfies BSPDE $-dg_t=\mathscr{G}_t\,dt-\cG_tdW_t$ in the weak sense (see Definition \ref{defn-solution}) with $ \mathscr G\in\cL^2_{\sF}(0,T;L^2([0,b])) $.\\[4pt]
	(ii) For any $v\in \cS^2_{\sF}(0,T;H^{1,2}([0,b]))\cap  \cL^2_{\sF}(0,T;H^{2,2}([0,b])) $ we have that ${\mathbb H} _{\cdot}(\cdot,v), (D{\mathbb H} )_{\cdot}(\cdot,v) \in \cL^2_{\sF}(0,T;L^2([0,b]))$, and there exists a \textit{nonnegative} constant $K_0$ such that for any $v_1,v_2\in\bR$, there holds almost surely
	\begin{align*}
	&\left|{\mathbb H} _t(x, v_1)  - {\mathbb H} _t(x, v_2)  \right| 
	 \leq 
	 K_0  \left|    v_1-v_2  \right|      ,\quad \text{for all }(t,\,x)\in [0,T]\times[0.b].
	\end{align*} \\[2pt]
	(iii) There exists a $\sP\otimes \mathscr{B}(\bR)\otimes\mathscr{B}(\bR)$-measurable $\Theta$-valued function $\Pi$ such that ${\mathbb H} _t(x,y)=\beta_t(x,\Pi_t(x,y))y+f_t(x,\Pi_t(x,y))$, i.e.,
	$$
	\Pi_t(x,y)\in \text{arg}\essinf_{\theta\in \Theta} \{  \beta_t(x,\theta) y + f_t(x,\theta)  \},
	$$
	 and for each $v\in \cS^2_{\sF}(0,T;H^{1,2}([0,b]))\cap  \cL^2_{\sF}(0,T;H^{2,2}([0,b])) $, the reflected SDE \eqref{state-proces-contrl} associated with drift coefficient $\beta_t(X_t,\Pi_t(X_t,v_t))$ has a unique solution.
\end{enumerate}

Now, we state the main theorem, whose proof requires some preparations which will be carried out subsequently.

\begin{thm}\label{thm-verification}
Let assumptions $(\cA 0)$, $(\cA 1)$, $(\cA 2)$ and $(\cA^*)$ hold {with $\sigma_t(0)=\sigma_t(b)=0$ a.s. for any $t\in[0,T]$}. BSPDE  \eqref{BSPDE-NP} admits a unique strong solution $(u,\psi)$. For this strong solution, we have further $(u,\psi)\in \cH^2$. Moreover, $u$ turns out to be the value function of the stochastic control problem \eqref{min-contrl-probm}, and the optimal control $\theta^*$ and state process $X^*$ are given by $\theta^*=\Pi_t(X^*_t,Du_t(X^*_t))$ and 
\begin{equation}\label{state-proces-contrl-optimal}
\left\{
\begin{split}
&dX^*_t=  \beta_t(X^*_t,\Pi_t(X^*_t,Du_t(X^*_t)))\,dt+\sigma_t(X^*_t)\,dW_t+\bar\sigma_t(X^*_t)\,dB_t \\
&\quad\quad\quad\quad +dL_t-dU_t; \\
&X^*_0=x; \quad L_0=U_0=0;\\
&0\leq X^*_t\leq b, \quad \text{a.s.};\\
&\int_0^T\!\! X^*_t\,dL_t =\int_0^T(b-X^*_s)\,dU_s=0,\quad \text{a.s.}
\end{split}
\right.
\end{equation}
\end{thm}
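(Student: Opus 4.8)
The plan is to split Theorem~\ref{thm-verification} into three logically distinct tasks and dispatch them in the order dictated by the regularity bootstrap outlined in the introduction. First, I would establish the \emph{well-posedness} of the Neumann problem \eqref{BSPDE-NP} in $\cH^1$ by invoking the general nonlinear theory promised in Section~3: check that the coefficient
\[
F_t(y,u,Du,D^2u,\psi,D\psi)=\tfrac12\bigl(|\sigma_t(y)|^2+|\bar\sigma_t(y)|^2\bigr)D^2u_t(y)+\sigma_t(y)D\psi_t(y)+{\mathbb H}_t(y,Du_t(y))
\]
satisfies the structural hypotheses of that theory. The super-parabolicity $|\bar\sigma_t(y)|^2\ge\kappa$ from $(\cA 1)$ gives uniform ellipticity of the leading term, $(\cA 2)$ gives the boundedness of $\sigma,\bar\sigma,D\sigma,D\bar\sigma$, and the Lipschitz bound in $(\cA^*)(\mathrm{ii})$ controls the Hamiltonian, so the continuity-method machinery applies and produces a unique strong solution $(u,\psi)\in\cH^1$, with $Du$ matching the boundary data $g_t(0),g_t(b)$ via the trace condition.

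Second, I would \emph{upgrade the regularity} to $(u,\psi)\in\cH^2$. The idea is to differentiate \eqref{BSPDE-NP} once in space and show that $(Du,D\psi)$ solves a Dirichlet problem on $[0,b]$: the Neumann conditions $Du_t(0)=g_t(0)$, $Du_t(b)=g_t(b)$ become Dirichlet boundary data for the unknown $Du$, and after subtracting the auxiliary field $g$ (which by $(\cA^*)(\mathrm{i})$ itself solves a BSPDE with datum $\mathscr G\in\cL^2_{\sF}(0,T;L^2([0,b]))$) the difference $Du-g$ has zero trace, so $DG-g_T\in H^{1,2}_0$ supplies a compatible terminal condition. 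The differentiated equation is linear in its top-order terms with coefficients of the required measurability and boundedness; applying the Dirichlet well-posedness of \cite{DuTang2010} and the regularity of ${\mathbb H}$ and $D{\mathbb H}$ from $(\cA^*)(\mathrm{ii})$ then yields $Du\in\cS^2_{\sF}(0,T;H^{1,2})\cap\cL^2_{\sF}(0,T;H^{2,2})$, i.e.\ $u\in\cH^2$. The hypotheses $\sigma_t(0)=\sigma_t(b)=0$ are precisely what is needed so that differentiating $\sigma D\psi$ does not create an uncontrolled boundary contribution and the differentiated leading coefficient retains the ellipticity.

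Third comes the \emph{verification}, which I expect to be the main obstacle. The goal is to show $u_t(x)=\essinf_\theta J_t(X_t;\theta)|_{X_t=x}$ and to identify the optimal control. Because $u$ is now $\cH^2$-regular, Sobolev embedding on the interval $[0,b]$ gives $u\in C^1$ in space with enough continuity to apply the generalized It\^o--Kunita--Wentzell formula (sketched in the appendix) to the composition $u_t(X^\theta_t)$ along \emph{any} admissible state process. Expanding $d\,u_t(X^\theta_t)$ generates the drift of \eqref{BSPDE-NP}, a $dW$ and $dB$ martingale part, and—crucially—boundary terms $Du_t(0)\,dL_t$ and $-Du_t(b)\,dU_t$ arising from the reflection; the Neumann conditions turn these into $g_t(0)\,dL_t$ and $-g_t(b)\,dU_t$, which combine with the flat-boundary constraints $\int X_t\,dL_t=\int(b-X_t)\,dU_t=0$ to reproduce exactly the running cost-of-reflection terms $\int g_t(X_t)\,dL_t+\int g_t(X_t)\,dU_t$ in \eqref{cost0}. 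Since ${\mathbb H}_t(x,Du_t(x))\le \beta_t(x,\theta)Du_t(x)+f_t(x,\theta)$ for every $\theta$, integrating and taking conditional expectation yields $u_t(x)\le J_t(X_t;\theta)$ for all admissible $\theta$, and equality holds when $\theta^*_t=\Pi_t(X^*_t,Du_t(X^*_t))$ attains the essential infimum by $(\cA^*)(\mathrm{iii})$. The delicate points are justifying that the local martingale parts are genuine martingales (so their conditional expectations vanish) and that the reflected SDE \eqref{state-proces-contrl-optimal} driven by the feedback drift is well-posed—both of which are guaranteed by the integrability built into $\cH^2$, the Lipschitz bound $|D\beta|\le\Lambda$ from $(\cA 2)$, and the admissibility clause in $(\cA^*)(\mathrm{iii})$.
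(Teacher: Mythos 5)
Your three-step plan (nonlinear Neumann theory for existence in $\cH^1$, differentiation plus the Dirichlet theory of \cite{DuTang2010} for the upgrade to $\cH^2$, then It\^o--Kunita--Wentzell for the verification) is exactly the architecture of the paper's proof, and steps two and three match it in detail. The one substantive omission is in your first step: the general well-posedness result of Section~3 (Theorem \ref{thm-Neumann-nonlinear}) is proved only for \emph{homogeneous} Neumann data $Du_t(0)=Du_t(b)=0$, so it cannot be invoked directly on \eqref{BSPDE-NP}. The paper first homogenizes the boundary condition by subtracting the antiderivative $\hat g_t(x)=\int_0^x g_t(y)\,dy$, using assumption (i) of $(\cA^*)$ to see that $(\hat g,\hat\cG)$ itself solves a BSPDE of the same form; the shifted pair $(u-\hat g,\psi-\hat\cG)$ then satisfies a zero-Neumann problem to which Theorem \ref{thm-Neumann-nonlinear} applies, and the same shift produces the zero-trace terminal datum $DG-g_T$ for your Dirichlet step. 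You perform the analogous subtraction only at the level of $Du-g$ in step two; it must also be done at the level of $u$ in step one, otherwise the existence claim has no source. One further caution on step three: the sign bookkeeping for the reflection terms $\int Du_r(X_r)\,d(L-U)_r$ against the cost terms $\int g\,dL+\int g\,dU$ is delicate and should be written out rather than asserted to ``reproduce exactly'' the running reflection costs.
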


The conditions $(\cA 0)$, $(\cA 1)$ and $(\cA 2)$ are considered as standing assumptions throughout this paper and they are standard to guarantee the adaptedness and super-parabolicity of BSPDE \eqref{BSPDE-NP} and the well-posedness of the controlled reflected SDEs (see \cite[Theorem 3.1 and Remark 3.3]{lions1984stochastic}). 

In assumption (i) of $(\cA ^*)$, to have $(u,\psi)\in\cH^2$, the requirements on $G$ is standard (see $L^p$-theory of BSPDE of \cite{DuQiuTang10}); in view of the Skorohod conditions of RSDE \eqref{state-proces-contrl}, one has
$$\int_0^Tg_s(X_s)\, dL_s=\int_0^Tg_s(0)\, dL_s,\quad \text{and}\quad \int_0^T g_s(X_s)\, dU_s=\int_0^Tg_s(b)\, dU_s,$$ 
so only the traces $g_s(0)$ and $g_s(b)$ of $g$ are involved in the control problem. In fact, assumption (i) of $(\cA ^*)$  allows $g_s(0)$ and $g_s(b)$ to be any processes that, together with another two processes $(\zeta^0,\zeta^b)$, satisfy BSDEs of the following form:
\begin{align*}
g_t(0)&=DG(0)+\int_t^T\tilde{g}^0_s\,ds-\int_t^T\zeta^0_s\,dW_s;\\
g_t(b)&=DG(b)+\int_t^T\tilde{g}^b_s\,ds-\int_t^T\zeta^b_s\,dW_s,
\end{align*}
with $\tilde{g}^0,\tilde{g}^b\in\cL^2(0,T;\bR)$, and we can construct (not uniquely) the time-space random function $g_t(x)$ in different ways. For instance, starting with $(g_t(0),g_t(b))$, one can construct linearly
$$
g_t(x)=g_t(0)+\frac{\left(g_t(b)-g_t(0)\right)x}{b},\quad \quad (t,x)\in[0,T]\times[0,b],
$$
which then satisfies assumption (i) of $(\cA ^*)$ with 
$$
\mathscr G_t(x)=\tilde{g}_t^0+\frac{\left(\tilde g_t^b-\tilde g_t^0\right)x}{b} \text{ and } \cG_t(x)=\zeta_t^0+\frac{\left(\zeta_t^b-\zeta_t^0\right)x}{b} \quad (t,x)\in[0,T]\times[0,b].
$$
In this paper, we adopt assumption (i) of $(\cA ^*)$  for the  convenience of discussions.

By (ii) of $(\cA ^*)$, we assume the Lipchitz continuity of Hamiltonian function ${\mathbb H} _t(x,v)$ with respect to $v$, which implies $\partial_v{\mathbb H} _{\cdot}(\cdot,v) \in L^{\infty}(\Omega\times[0,T]\times [0,b])$ for any $v\in\bR^d$. This excludes the control problems of linear-quadratic type. The quadratic case definitely needs more efforts, for which we need to deal with not only the quadratic growth but also the improved regularity in Theorem \ref{thm-verification}, so we would postpone the discussions on quadratic cases to a future work.

In (iii) of $(\cA ^*)$, $\Pi$ is the minimizer function of ${\mathbb H} _t(x,v)$ (see \eqref{eq-hamiltonian}) and for each $u\in  \cS^2_{\sF}(0,T;H^{2,2}([0,b]))\cap  \cL^2_{\sF}(0,T;H^{3,2}([0,b])) $, the composite function $\beta_t(x,\pi_t(x,Du_t(x)))$ may not be Lipchitz continuous with respect to $x$. The following example contains such a case. But still the reflected SDE \eqref{state-proces-contrl} has a unique strong solution.


\begin{exam}
Let $d=n=1$, $\Theta=[-1,0]$ and $\beta_t(x,\theta)\equiv \theta$, while $f_t(x,\theta)=\mu |\theta| + h_t(x)$ with $\mu\in\bR^+$ and $h\in\cL^2_{\sF}(H^1([0,b]))$. 
Suppose $\sigma$ and $\bar\sigma$ satisfy the assumptions in Theorem \ref{thm-verification}.
Assume $g_t(x)\equiv \frac{px}{b}$ as in \cite{ata2005drift}. Let $G(x)=\frac{px^2}{2b}$.
Then
$$
{\mathbb H} _t(x,Du_t)=\essinf_{-1\leq \theta \leq 0}
\left\{
\theta Du_t(x) +\mu |\theta |  +h_t(x)
\right\}  
=-\left( Du_t(x)-\mu  \right)^+ +h_t(x),
$$
and 
$$
\Pi_t(x,Du_t(x))= -\textbf{1}_{\{Du_t(x)>\mu\}}.
$$
It is easy to check that $(\cA 0)-(\cA 2)$ and (i) and (ii) of $(\cA^*)$ hold. Obviously,  the drift $\beta=\Pi$, as a step function, is not necessarily Lipchitz continuous with respect to $x$ for each $u\in \cS^2_{\sF}(0,T;H^{2,2}([0,b]))\cap  \cL^2_{\sF}(0,T;H^{3,2}([0,b])) $. In our case, the resulting reflected SDE reads
\begin{equation}\label{state-proces-contr-examl}
\left\{
\begin{split}
&dX_t= -\textbf{1}_{\{Du_t(X_t)>\mu\}} \,dt+\sigma_t(X_t)\,dW_t+\bar\sigma_t(X_t)\,dB_t+dL_t-dU_t,   \\
&\quad\quad\quad \quad  t\in[0,T] ;\\
&X_0=x; \quad L_0=U_0=0;\\
&0\leq X_t\leq b, \quad \text{a.s.};\\
&\int_0^T\!\! X_t\,dL_t =\int_0^T(b-X_s)\,dU_s=0,\quad \text{a.s.}
\end{split}
\right.
\end{equation}
 In fact, given $u\in \cS^2_{\sF}(0,T;H^{2,2}([0,b]))\cap  \cL^2_{\sF}(0,T;H^{3,2}([0,b])) $, $X$ is the unique solution  to the reflected SDE
\begin{equation}\label{state-proces-contr-examl-0drift}
\left\{
\begin{split}
&dX^0_t= \sigma_t\,dW_t+\bar\sigma_t\,dB^{\mathbb Q}_t+dL_t-dU_t,    \quad  t\in[0,T] ;\\
&X^0_0=x; \quad L_0=U_0=0;\\
&0\leq X^0_t\leq b, \quad \text{a.s.};\\
&\int_0^T\!\! X^0_t\,dL_t =\int_0^T(b-X^0_s)\,dU_s=0,\quad \text{a.s.,}
\end{split}
\right.
\end{equation}
where $B^{\mathbb Q}$ is a Wiener process under the equivalent probability measure $\mathbb{Q}$ with
  \begin{equation*}
  \begin{split}
  \frac{d\mathbb{Q}}{d\mathbb{P}} :=&\exp\biggl( \int_0^T  \textbf{1}_{\{Du_t(X_t)>\mu\}}  \left |\bar\sigma_t (X_t)\right|^{-1}\,dB_s
  \\
  &
  \quad\quad\quad\quad\quad\quad\quad
  -\frac{1}{2}\int_{0}^T\textbf{1}_{\{Du_t(X_t)>\mu\}}  \left |\bar\sigma_t(X_t)\right|^{-2} \,ds\biggr),
  \end{split}
\end{equation*}
and analogous to \cite[Propositions 3.6 \& 3.10 ]{karatzas2012brownian}, Girsanov theorem implies the unique existence of the \textit{weak} solution for reflected SDE \eqref{state-proces-contr-examl}. In order to get the unique existence of (strong) solution, by analogy to \cite[Corollary 3.23]{karatzas2012brownian},  it remains to prove the uniqueness of (strong) solution (also called pathwise uniqueness) to  reflected SDE \eqref{state-proces-contr-examl}. Suppose $(X^1,L^1,U^1)$ and $(X^2,L^2,U^2)$ are two (strong) solutions of \eqref{state-proces-contr-examl} (on the same probability space). Simple calculations give
\begin{align*}
&(X^1_t-X^2_t)^+\\
&= \int_0^t \left( \textbf{1}_{\{Du_s(X^2_s)>\mu\}} -\textbf{1}_{\{Du_s(X^1_s)>\mu\}} \right) \textbf{1}_{\{ X^1_s>X^2_s \}}\,ds 
		\\
		&\quad \quad 
		+\int_0^t \textbf{1}_{\{ X^1_s>X^2_s \}} (dL^1_s-dL^2_s)
		-\int_0^t \textbf{1}_{\{ X^1_s>X^2_s \}} (dU^1_s-dU^2_s)\\
		&\quad \quad 
		+\int_0^t \textbf{1}_{\{ X^1_s>X^2_s \}} (\sigma_s(X^1_s)-\sigma_s(X^2_s))\,dW_s\\
		&\quad\quad
		+\int_0^t \textbf{1}_{\{ X^1_s>X^2_s \}} (\bar\sigma_s(X^1_s)-\bar\sigma_s(X^2_s))\,dB_s
		\\
&= \int_0^t \left( \textbf{1}_{\{Du_s(X^2_s)>\mu\}} -\textbf{1}_{\{Du_s(X^1_s)>\mu\}} \right) \textbf{1}_{\{ X^1_s>X^2_s \}}\,ds -\int_0^t \textbf{1}_{\{ X^1_s>0 \}}\,dL^2_s
\\
&\quad
\quad-\int_0^t \textbf{1}_{\{ X^2_s<b \}}\,dU^1_s
	+\int_0^t \textbf{1}_{\{ X^1_s>X^2_s \}} (\sigma_s(X^1_s)-\sigma_s(X^2_s))\,dW_s \\
&\quad\quad
		+\int_0^t \textbf{1}_{\{ X^1_s>X^2_s \}} (\bar\sigma_s(X^1_s)-\bar\sigma_s(X^2_s))\,dB_s,
\end{align*}
where Skorohod conditions indicate relations
{\small{$$
\int_0^t \textbf{1}_{\{ X^1_s>X^2_s \}} dL^1_s= \int_0^t \textbf{1}_{\{ 0>X^2_s \}} dL^1_s=0=\int_0^t \textbf{1}_{\{ X^1_s>b \}}  dU^2_s=\int_0^t \textbf{1}_{\{ X^1_s>X^2_s \}}  dU^2_s.
$$
}}
Therefore,
\begin{align*}
&X^1_t \vee  X^2_t
\\
&= X^2_t+(X^1_t-X^2_t)^+  \\
&= x-\int_0^t \textbf{1}_{\{Du_s(X^2_s)>\mu\}}  \,ds \\
&\quad+ \int_0^t \left( \textbf{1}_{\{Du_s(X^2_s)>\mu\}} -\textbf{1}_{\{Du_s(X^1_s)>\mu\}} \right) \textbf{1}_{\{ X^1_s>X^2_s \}}\,ds\\
&\quad + L^2_t-\int_0^t \textbf{1}_{\{ X^1_s>0 \}}\,dL^2_s
-\int_0^t \textbf{1}_{\{ X^2_s<b \}}\,dU^1_s- U^2_t
\\
&\quad
+\int_0^t\left( \sigma_s(   X^2_s)+ \textbf{1}_{\{ X^1_s>X^2_s \}} (\sigma_s(X^1_s)-\sigma_s(X^2_s))  \right)\,dW_s\\
&\quad
		+\int_0^t\left( \bar\sigma_s(  X^2_s)+\textbf{1}_{\{ X^1_s>X^2_s \}} (\bar\sigma_s(X^1_s)-\bar\sigma_s(X^2_s)) \right)\,dB_s\\
&= x-\int_0^t \textbf{1}_{\{Du_s(X^1_s\vee X^2_s)>\mu\}}  \,ds  +\int_0^t \,d\check L_s
-\int_0^t \,d\check U_s\\
&\quad
+\int_0^t\sigma_s(X^1_s \vee  X^2_s)\,dW_s
		+\int_0^t\bar\sigma_s(X^1_s \vee  X^2_s)\,dB_s,
\end{align*}
with $d\check L_s= \textbf{1}_{\{ X^1_s\leq 0 \}}\,dL^2_s$ and $d\check U_s= \textbf{1}_{\{ X^2_s<b \}}\,dU^1_s+ dU^2_s$.
Noticing 
$$0\leq \left( X^1_t\vee X^2_t \right) \textbf{1}_{\{ X^1_t\leq 0 \}}\,dL^2_t 
\leq  X^2_t  dL^2_t =0, $$
and 
\begin{align*}
0&\leq \left(b-X^1_t\vee X^2_t \right)  \left(  \textbf{1}_{\{ X^2_t<b \}}\,dU^1_t +d U^2_t   \right)
\\
&\leq \left(b-X^1_t\right)   \,dU^1_t +\left(b- X^2_t \right) dU^2_t =0,
\end{align*}
we see that $(X^1\vee X^2,\check L,\check U)$ is also a (strong) solution. Hence, $X^1$ and $X^1\vee X^2$ have the same probability law and this is only true if $X^1$ and $X^1\vee X^2$ are indistinguishable, i.e. the pathwise uniqueness holds. This finally indicates that reflected SDE \eqref{state-proces-contr-examl} has a unique (strong) solution. Therefore, the assumption (iii) of $(\cA^*)$ is satisfied and Theorem \ref{thm-verification} applies.

\end{exam}


\section{Existence and uniqueness of a strong solution for general nonlinear BSPDEs}

In this section, we shall establish the existence and uniqueness of strong solution for the Neumann problem for general nonlinear BSPDEs, which might be of interest even out of the current context. For simplicity, we only consider the 1-dimensional case, though there would be no essential difficulty for multi-dimensional extensions.

Consider the following Neumann problem:
\begin{equation}\label{BSPDE-NP-nonlinear}
  \left\{\begin{array}{l}
  \begin{aligned}
  -du_t(x)=\, &\bigg[ \frac{1}{2}\left( | \sigma_t(x)|^2+|\bar\sigma_t(x)|^2\right)D^2 u_t(x)
        + \sigma_t D \psi_t(x)\\&+\Gamma_t(x,u,Du,D^2u,\psi,D\psi)
         \bigg]\, dt  
        -\psi_t(x)\, dW_{t}, \\
        Du_t(0)=\,&0,\quad Du_t(b)=0;\\
    u_T(x)=\,& G(x),  \quad x\in[0,b].
    \end{aligned}
  \end{array}\right.
\end{equation}

The following assumption is restricted to this section.
\begin{enumerate}[$({\mathcal A} 3)$]
	\item   For any $(u,\psi)\in H^{2,2}([0,b])\times H^{1,2}([0,b])$, $\Gamma_{\cdot}(\cdot,u,Du,D^2u,\psi,D\psi)\in\cL^2(0,T;L^2([0,b]))$, and there exist \textit{nonnegative} constants $\mu$ and $L$ such that for any $(u_i,\,\psi_i)\in H^{2,2}([0,b])\times H^{1,2}([0,b])$, $i=1,2$, there holds
	\begin{align*}
	&\left\|\Gamma_t(\cdot, u_1,Du_1,D^2u_1,\psi_1,D\psi_1 )  - \Gamma_t(\cdot, u_2,Du_2,D^2u_2,\psi_2,D\psi_2 )  \right\| \\
	& \leq 
	\mu \left( \left\|D^2(u_1-u_2)\right\| + \left\|D(\psi_1-\psi_2) \right\|\right) 
	\\& + L \left( \left\|    u_1-u_2  \right\|_{H^{1,2}([0,b])}      +   \left\|    \psi_1-\psi_2  \right\|_{L^{2}([0,b])}  \right),\quad \text{a.s.,}
	\end{align*} 
	for any $t\in[0,T]$.
\end{enumerate}

\begin{rmk} \label{rmk-fully-nonlinear}
Assumption $(\cA 3)$ holds for the following semi-linear fucntional:
$$
\Gamma_t(x,u,Du,D^2u,\psi,D\psi)
=\alpha_tDu_t(x) + c_tu_t(x)+\gamma_t\psi+h_t(x,u,Du,\psi)
$$
with bounded coefficients $\alpha$, $c$, $\gamma$ and a certain Lipchitz continuous (w.r.t. $(u,Du,\psi)$)  function $h$. In particular, letting assumptions $(\cA 0)-(\cA 2)$ and $(\cA^*)$ hold, the Hamiltonian function $\mathbb H_t(x, Du_t(x))$ in BSPDE \eqref{BSPDE-NP} satisfies assumption $(\cA 3)$. More examples can be constructed in a similar way to \cite[Remark 5.1]{DuQiuTang10}. It is worth noting that Assumption $(\cA 3)$ allows $\Gamma$ to be fully nonlinear with a small dependence on $D^2u$ and $D\psi$.
\end{rmk}

The existence and uniqueness of strong solution to BSPDE \eqref{BSPDE-NP-nonlinear} is summarized below.
\begin{thm}\label{thm-Neumann-nonlinear}
Let $G\in L^2(\Omega,\sF_T;H^{1,2}([0,b]))$ and assumptions $(\cA0)$, $(\cA1)$, $(\cA2)$ and $(\cA3)$ hold. There exists a positive constant $\mu_0$ depending on $\kappa$, $L$, $K$ and $T$, such that when $0\leq \mu<\mu_0$, BSPDE \eqref{BSPDE-NP-nonlinear}  admits a unique strong solution $(u,\psi)$ satisfying
\begin{align}
\|(u,\psi)\|_{\cH^1}
\leq C\left(
\|G\|_{L^2(\Omega,\sF_T;H^{1,2}([0,b]))} 
+\left\|\Gamma^0 \right\|_{\cL^2(0,T;L^2([0,b]))}
\right), \label{thm-est}
\end{align}
where $\Gamma^0\triangleq \Gamma_{\cdot}(\cdot,0,0,0,0,0)$ and the constant $C$ depends on $\mu$, $L$, $\kappa$, $K$ and $T$.
\end{thm}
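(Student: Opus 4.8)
The plan is to reduce the Neumann problem to a homogeneous \emph{Dirichlet} problem by differentiating in the space variable, so that the square-norm It\^o formula recalled in the appendix becomes applicable, and then to combine a priori estimates with the continuity method. Writing $a_t(x)=\frac12(|\sigma_t(x)|^2+|\bar\sigma_t(x)|^2)$ and setting $v=Du$, $\phi=D\psi$, differentiation of \eqref{BSPDE-NP-nonlinear} in $x$ yields
\[
-dv_t=\big[a_tD^2v_t+(Da_t)Dv_t+\sigma_tD\phi_t+(D\sigma_t)\phi_t+D\Gamma_t\big]\,dt-\phi_t\,dW_t,
\]
with the \emph{homogeneous} Dirichlet condition $v_t(0)=v_t(b)=0$ inherited from $Du_t(0)=Du_t(b)=0$ and terminal value $v_T=DG$. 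The point of this reduction is that for $v$ all the boundary terms produced by integration by parts vanish, whereas for $u$ itself the term $\langle u,\sigma D\psi\rangle$ generates a boundary contribution $[u\sigma\psi]_0^b$ that is exactly the obstruction flagged in the footnote (and the reason the weak-solution machinery fails).

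\textbf{Step 1 (a priori estimate).} I would apply the It\^o formula for square norms to $\|v_t\|^2$. After integrating by parts (all boundary terms vanish since $v=0$ at $0,b$), the first-order coefficients $Da$ and $D\sigma$ cancel against the terms $(Da)Dv$ and $(D\sigma)\phi$, leaving
\[
\|v_t\|^2+\int_t^T\!\big(\|\phi_s-\sigma_sDv_s\|^2+\|\bar\sigma_sDv_s\|^2\big)\,ds=\|DG\|^2-2\!\int_t^T\!\langle Dv_s,\Gamma_s\rangle\,ds-2\!\int_t^T\!\langle v_s,\phi_s\,dW_s\rangle .
\]
Here the crucial \emph{completion of the square} $\|\phi\|^2+\|\sigma Dv\|^2-2\langle\sigma Dv,\phi\rangle=\|\phi-\sigma Dv\|^2$ absorbs the indefinite cross term coming from $\sigma D\psi$, and super-parabolicity $(\cA1)$ supplies the genuine coercivity $\|\bar\sigma Dv\|^2\ge\kappa\|Dv\|^2$; one then recovers $\|\phi\|^2$ from $\|\phi-\sigma Dv\|^2$ and $\|\sigma Dv\|^2\le K^2\|Dv\|^2$. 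Using $(\cA3)$ to bound $\|\Gamma\|$ by $\|\Gamma^0\|+\mu(\|Dv\|+\|\phi\|)+L(\|u\|_{H^{1,2}([0,b])}+\|\psi\|)$, the high-order Lipschitz part $\mu(\|Dv\|+\|\phi\|)$ must be absorbed into $\kappa\|Dv\|^2$ and $\|\phi-\sigma Dv\|^2$; this is precisely what forces the smallness threshold $\mu<\mu_0(\kappa,K,L,T)$. The remaining $L$-terms, together with an analogous estimate for $\|u_t\|^2$ and $\|\psi\|$ obtained from the original equation (where $\langle u,\sigma D\psi\rangle$ is kept un-integrated and bounded by $K\|u\|\|D\psi\|$, so that $\sigma_t(0)=\sigma_t(b)=0$ is \emph{not} needed here), are handled by Young's inequality, the Burkholder--Davis--Gundy inequality and Gronwall's lemma, yielding \eqref{thm-est}.

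\textbf{Step 2 (existence and uniqueness).} Uniqueness is immediate: the difference of two strong solutions solves the homogeneous problem, and the estimate of Step 1 with $G=0$, $\Gamma^0=0$ forces it to vanish in $\cH^1$. For existence I would first solve the \emph{affine} problem in which $\Gamma$ is frozen to a given $f\in\cL^2(0,T;L^2([0,b]))$: solve the corresponding homogeneous Dirichlet problem for $(v,\phi)$ by the theory of \cite{DuTang2010}, then reconstruct $u$ by spatial integration of $v=Du$ together with a scalar backward equation determining its spatial mean, and set $\psi$ from $\phi$; one checks that the reconstructed pair satisfies the Neumann condition and \eqref{BSPDE-NP-nonlinear} in the strong sense. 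With this base case and the uniform a priori bounds of Step 1, I would run the continuity method along $\Gamma^\lambda=\lambda\Gamma+(1-\lambda)\Gamma^0$, $\lambda\in[0,1]$: the a priori estimate makes the solvable set closed, while openness follows from a contraction on $\cH^1$ whose contraction constant is controlled by $\mu/\mu_0<1$; since $\lambda=0$ is solvable, solvability propagates to $\lambda=1$.

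\textbf{Main obstacle.} The essential difficulty is structural rather than computational: the square-norm It\^o formula is unavailable for the Neumann problem because of the boundary term attached to $\sigma D\psi$, and the indefinite coupling $\sigma D\psi$ cannot be treated by naive Cauchy--Schwarz. Both are resolved simultaneously by passing to the differentiated (Dirichlet) equation and by the completing-the-square identity, after which super-parabolicity furnishes coercivity; the price is the smallness condition $\mu<\mu_0$, and the only other delicate point is the faithful reconstruction of $(u,\psi)$ — in particular its Neumann trace — from the differentiated solution $(Du,D\psi)$.
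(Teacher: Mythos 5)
Your Step 1 is in substance the paper's Proposition \ref{prop-apriori}: the same two-tier scheme is used there, namely an $L^2$-estimate on $u$ itself in which $\langle u_s,\sigma_s D\psi_s\rangle$ is bounded by $\frac{K^2}{\eps}\|u_s\|^2+\eps\|D\psi_s\|^2$ and the dangling $\eps\|D\psi\|^2$ is absorbed only after the gradient estimate (see \eqref{est-psi-apriori}), plus a Dirichlet-level estimate for $(Du,D\psi)$ in which the divergence structure kills all boundary terms and the super-parabolicity of $\bar\sigma$ supplies coercivity (see \eqref{gradient-0}--\eqref{gradient-2}). Your completion of the square $\|\phi\|^2+\|\sigma Dv\|^2+2\langle\sigma Dv,\phi\rangle=\|\phi+\sigma Dv\|^2$ (note the sign: the cross term comes in with a plus) is a slightly cleaner packaging of the paper's Young-inequality step, and the smallness threshold $\mu<\mu_0$ arises for the same reason in both arguments. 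The uniqueness argument is also identical.

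Where you genuinely diverge is the existence base case, and that is where the gap sits. The paper deforms the \emph{leading operator}: the parameter $\lambda$ in \eqref{BSPDE-NP-lambda} interpolates between the Laplacian and $\frac12(|\sigma|^2+|\bar\sigma|^2)D^2+\sigma D$, and the base case $\lambda=0$ is solved self-containedly in Proposition \ref{prop-model} by solving the deterministic Neumann heat equation pathwise and taking conditional expectations. You instead keep the full operator, freeze $\Gamma$ at some $f\in\cL^2(0,T;L^2([0,b]))$, and propose to solve the resulting affine Neumann problem by solving the Dirichlet problem for $(Du,D\psi)$ and reconstructing $u$. Two things are missing. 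First, the differentiated equation has source $Df$, the distributional derivative of an $\cL^2(0,T;L^2([0,b]))$ function, so the strong-solution Dirichlet theory of \cite{DuTang2010} (the paper's Lemma \ref{lem-DT}, which moreover requires $\sigma_t(0)=\sigma_t(b)=0$ --- a hypothesis \emph{not} available in Theorem \ref{thm-Neumann-nonlinear}) does not apply; you would have to invoke the weak-solution theory for divergence-form Dirichlet BSPDEs, a nontrivial external input that the paper's construction deliberately avoids. Second, the reconstruction of the integration constants is not innocuous: integrating the differentiated equation from $0$ to $x$ produces the evaluation of $\frac12(|\sigma|^2+|\bar\sigma|^2)D^2u+\sigma D\psi+f$ at $x=0$, i.e.\ the trace of an $L^2([0,b])$ function, which is undefined; you must instead couple $(Du,D\psi)$ with the scalar BSDE satisfied by the spatial mean $\frac1b\int_0^b u_t(x)\,dx$, whose driver is a spatial average and hence well defined, and then verify that the reconstructed pair satisfies the original (not merely the differentiated) equation together with the Neumann trace. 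These points are repairable, but as written the base case is not established; the paper's Laplacian base case plus operator deformation, for which the $\lambda$-uniform estimate of Proposition \ref{prop-apriori} is tailored, is precisely how it sidesteps them.
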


For the proof of Theorem \ref{thm-Neumann-nonlinear}, we shall first establish a priori estimates for some linear equations  in Section \ref{subsec:estimates} and then use the method of continuity to complete the proof in Section \ref{subsec:general nonlinear}. The readers may turn to Section \ref{subsec:proof} for the proof of Theorem \ref{thm-verification} for the main result of this work.

\subsection{The a priori estimates} \label{subsec:estimates}

For each $\lambda\in[0,1]$, we consider the following linear BSPDE:
\begin{equation}\label{BSPDE-NP-linear}
  \left\{\begin{array}{l}
  \begin{aligned}
  -du_t(x)=\, &\bigg[ \frac{\lambda}{2}\left( | \sigma_t(x)|^2+|\bar\sigma_t(x)|^2\right)D^2 u_t(x)
        +\lambda \sigma_t (x)D \psi_t(x)\\
        &+\frac{1-\lambda}{2} D^2u_t(x)
        +h_t(x)
         \bigg]\, dt  
        -\psi_t(x)\, dW_{t}, \\
        &\quad\quad \quad (t,x)\in [0,T]\times [0,b];\\
        Du_t(0)=\,&0,\quad Du_t(b)=0;\\
    u_T(x)=\,& G(x),  \quad x\in[0,b].
    \end{aligned}
  \end{array}\right.
\end{equation}

\begin{prop}\label{prop-apriori}
Let $(\cA 0)$, $(\cA 1)$ and $(\cA 2)$ hold and 
$$h\in\cL^2(0,T;L^2([0,b])),\quad G\in L^2(\Omega,\sF_T;H^{1,2}([0,b])).$$ Suppose $(u,\psi)$ is a strong solution of Neumann problem \eqref{BSPDE-NP-linear}. Then the strong solution is unique and it satisfies
\begin{align*}
&\|(u,\psi)\|^2_{\cH}
\\
&\leq
C_1\, \left\{ 
\|G\|^2 _{L^2(\Omega,\sF_T;L^2([0,b]))}
+ E\Bigg[ \int_0^T   
\left|\llangle h_s,\, u_s  \rrangle \right| 
+\frac{1}{\eps} \|u_s\|^2
+\eps   \|D\psi_s\|^2
\,ds \Bigg]\right\}, \\
& \forall \eps>0, \\
&\text{and }\\
&\|(u,\psi)\|^2_{\cH^1}
\\
& \leq C_2\,\left\{
\|G\|^2_{L^2(\Omega,\sF_T;H^{1,2}([0,b]))} + E\left[ \int_0^T \left(\left| \llangle h_t, \, u_t\rrangle\right| 
+\left| \llangle h_t,\,D^2u_t\rrangle \right| \right) \,dt \right]
\right\}\\
&\leq 
C_3\,\left\{
\|G\|^2_{L^2(\Omega,\sF_T;H^{1,2}([0,b]))} + \|h\|^2_{\cL^2_{\sF}(0,T;L^2([0,b]))}
\right\},
\end{align*}
where the constants $C_1,\,C_2$ and $C_3$ depend only on $\kappa$, $K$ and $T$ and are independent of $\lambda\in[0,1]$.
\end{prop}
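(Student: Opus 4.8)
The plan is to establish the two energy estimates by applying the Itô formula for the square of the $L^2$-norm (recalled in the appendix) to the strong solution $(u,\psi)$, first to $u_t$ itself and then to $Du_t$, exploiting the Neumann boundary conditions to kill the boundary terms that arise in the integration by parts. I would begin with the $\cH$-estimate. Applying the Itô formula to $\|u_t\|^2$ over $[t,T]$ produces, on the left, $\|u_t\|^2$, and on the right, $\|G\|^2$, a drift term $2\int_t^T \la u_s, -du_s\ra$ coming from the generator, and the martingale plus quadratic-variation term $-2\int_t^T\la u_s,\psi_s dW_s\ra + \int_t^T\|\psi_s\|^2\,ds$. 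The key computation is the integration by parts on the leading second-order term $\la u_s, \tfrac12(\lambda(|\sigma|^2+|\bar\sigma|^2)+(1-\lambda))D^2 u_s\ra$: integrating by parts in $x$ over $[0,b]$ and using $Du_s(0)=Du_s(b)=0$, the boundary contributions vanish, leaving $-\tfrac12\int \big(\lambda(|\sigma|^2+|\bar\sigma|^2)+(1-\lambda)\big)|Du_s|^2 - \tfrac12\int \lambda\,D(|\sigma|^2+|\bar\sigma|^2)\,u_s\,Du_s$. The first of these is the coercive term that, by $(\cA1)$ and the convex combination structure, is bounded below by $\tfrac12(\lambda\kappa + (1-\lambda))\|Du_s\|^2 \geq \tfrac12\min(\kappa,1)\|Du_s\|^2$, giving control of $u$ in $H^{1,2}$; the second is lower order and absorbed via $(\cA2)$ (boundedness of $D\sigma,D\bar\sigma$) and Young's inequality.

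The delicate point — and I expect it to be the main obstacle — is the cross term $\la u_s, \lambda\sigma_s D\psi_s\ra$. This term carries a derivative on $\psi$, which cannot be controlled by $\|\psi\|$ alone, and (as the footnote in the introduction stresses) this is exactly why the standard $L^2$ energy method fails for the Neumann problem with nontrivial $\sigma$. I would handle it by integrating by parts in $x$: $\la u_s, \lambda\sigma_s D\psi_s\ra = -\la D(\lambda\sigma_s u_s),\psi_s\ra + [\lambda\sigma_s u_s\psi_s]_0^b$. Here the boundary term is killed precisely by the standing hypothesis $\sigma_t(0)=\sigma_t(b)=0$, so it vanishes, and the remaining bulk term $-\la \lambda(D\sigma_s)u_s + \lambda\sigma_s Du_s,\psi_s\ra$ is now controllable: using $(\cA2)$ it is bounded by $C(\|u_s\|+\|Du_s\|)\|\psi_s\|$. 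Against the good quadratic term $\int\|\psi_s\|^2$ produced by the Itô correction and the coercivity in $Du$, Young's inequality and Gronwall close the estimate, yielding the first displayed bound with the $\tfrac1\eps\|u_s\|^2 + \eps\|D\psi_s\|^2$ structure after the derivative on $\psi$ is accounted for; the $\eps\|D\psi_s\|^2$ slack is what survives from the cross term when one does not integrate by parts but instead directly applies Young's inequality, and keeping it explicit is what later allows the nonlinear perturbation to be absorbed.

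For the $\cH^1$-estimate I would differentiate the equation once in space. Writing $v=Du$ and $\phi=D\psi$, the pair $(v,\phi)$ solves a BSPDE of the same leading type with now \emph{Dirichlet} boundary conditions $v_t(0)=v_t(b)=0$ (since $Du_t(0)=Du_t(b)=0$), together with lower-order terms generated by differentiating the coefficients and the forcing $Dh$. Applying the Itô formula to $\|v_t\|^2 = \|Du_t\|^2$ and integrating by parts — now the boundary terms vanish because of the homogeneous Dirichlet condition on $v$ — gives coercivity in $\|D^2 u_s\|^2$ exactly as before, with the new cross term $\la Du_s,\lambda\sigma_s D^2\psi_s\ra$ handled analogously. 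The forcing term appears as $\la Dh_s, Dv_s\ra$, which after integration by parts becomes $-\la h_s, D^2 u_s\ra$, explaining the appearance of $\la h_t,D^2u_t\ra$ in the intermediate bound. Combining this with the $\cH$-estimate controls $\|(u,\psi)\|_{\cH^1}$ in terms of $\|G\|_{H^{1,2}}$ and the two duality pairings $|\la h_t,u_t\ra|$ and $|\la h_t,D^2u_t\ra|$; a final application of Young's inequality to each pairing — $|\la h_t,D^2u_t\ra|\leq \tfrac{1}{2\delta}\|h_t\|^2 + \tfrac{\delta}{2}\|D^2 u_t\|^2$ with $\delta$ chosen small enough to be absorbed by the coercive $\|D^2u\|^2$ term — converts the middle bound into the clean $C_3$ bound. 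Uniqueness of the strong solution follows immediately by applying the $\cH$-estimate to the difference of two solutions with $G=0$ and $h=0$. Throughout, all constants depend only on $\kappa$, $K$, $T$ through the coercivity constant and the sup-bounds in $(\cA2)$, and are manifestly independent of $\lambda\in[0,1]$ since the coercive lower bound $\tfrac12\min(\kappa,1)$ does not degenerate over the homotopy.
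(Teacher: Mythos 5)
Your overall architecture (It\^o formula for the square norm applied first to $u$, then to $v=Du$, which satisfies a Dirichlet problem) matches the paper's, but your treatment of the cross term $\la u_s,\lambda\sigma_s D\psi_s\ra$ has two genuine problems. First, the hypothesis $\sigma_t(0)=\sigma_t(b)=0$ that you invoke to kill the boundary term $[\lambda\sigma_s u_s\psi_s]_0^b$ is \emph{not} among the assumptions of Proposition \ref{prop-apriori}; it is only imposed later, in Theorem \ref{thm-verification} and Lemma \ref{lem-DT}. The paper therefore does not integrate by parts at the $\cH$ level at all: it applies Cauchy--Schwarz directly, $\la u_s,2\sigma_s D\psi_s\ra\le \frac{K^2}{\eps}\|u_s\|^2+\eps\|D\psi_s\|^2$, which is precisely why the uncontrolled term $\eps\|D\psi_s\|^2$ sits on the right-hand side of the first estimate. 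That term is not optional ``slack'': it is genuinely unabsorbed at this stage and is only dealt with once the $\cH^1$ estimate puts $\|D\psi\|^2$ on the left. Your reading of the statement has this backwards.

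Second, and more seriously, in Step 2 the cross term cannot be ``handled analogously'' by a crude bound $C(\|v_s\|+\|Dv_s\|)\|\Psi_s\|$ followed by Young's inequality. After integration by parts (now legitimate because $v=Du$ vanishes at the boundary, with no condition on $\sigma$ needed), the dangerous contribution is $-2\lambda\la \sigma_s Dv_s,\Psi_s\ra$, where both factors are top-order. The only good term controlling $\|\Psi_s\|^2$ is the It\^o correction $-\int\|\Psi_s\|^2\,ds$, with coefficient exactly $1$, so the Young constant in front of $\|\Psi_s\|^2$ must be strictly below $1$; this forces the companion term to be $(1+\eps_3)\lambda\|\sigma_s Dv_s\|^2$, which exceeds the coercivity $\lambda\|\sigma_s Dv_s\|^2$ supplied by the $|\sigma|^2$ part of the leading operator. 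The excess $\eps_3\lambda\|\sigma_s Dv_s\|^2\le\eps_3\lambda K^2\|Dv_s\|^2$ is then absorbed by the \emph{separate} coercive term $\lambda\|\bar\sigma_s Dv_s\|^2\ge\lambda\kappa\|Dv_s\|^2$ furnished by the superparabolicity $(\cA 1)$ of $\bar\sigma$ alone (the paper takes $\eps_3=\kappa/(2K^2)$). A generic estimate $2K\|Dv\|\|\Psi\|\le \frac{K^2}{\delta}\|Dv\|^2+\delta\|\Psi\|^2$ cannot satisfy both $\delta<1$ and $\frac{K^2}{\delta}$ smaller than the coercivity constant when $K^2>\kappa$, so your argument as written does not close. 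This completion-of-squares step is the heart of the proposition and the reason the second Brownian motion $B$ and assumption $(\cA 1)$ are needed at all. (Minor points: the forcing term in Step 2 is $\la v_s,Dh_s\ra=-\la Dv_s,h_s\ra=-\la D^2u_s,h_s\ra$, not $\la Dh_s,Dv_s\ra$; and the $\cS^2$ components of the $\cH$ and $\cH^1$ norms still require a BDG argument, which the paper runs with a $\delta\in\{0,1\}$ bookkeeping device before Gronwall.)
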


\begin{proof}
\tbf{Step 1.} Applying It\^o's formula (see Lemma \ref{lem-ito-formula}) to the square norm yields
\begin{align*}
&\|u_t\|^2+\int_t^T\|\psi_s\|^2\, ds-\|G\|^2\\
&= \int_t^T \llangle u_s,\,
\lambda \left[ (|\sigma_s|^2+|\bar\sigma_s|^2)D^2u_s 
+ 2  \sigma_sD\psi_s \right] + (1-\lambda)D^2u_s
+2 h_s
    \rrangle\,ds
\\& -2\int_t^T\llangle u_s,\, \psi_s\,dW_s\rrangle,\quad \text{a.s. }
\end{align*}
for any $ t\in[0,T]$. In view of the Neumann boundary condition, we have 
$$\int_t^T\llangle u_s,\,(1-\lambda) D^2u_s \rrangle = -(1-\lambda)\int_t^T \|Du_s\|^2\,ds$$
 and 
\begin{align*}
& \int_t^T \llangle u_s,\,(|\sigma_s|^2+|\bar\sigma_s|^2)D^2u_s\rrangle\,ds
\\& =- \int_t^T \!\!\!   \llangle Du_s,\,(|\sigma_s|^2+|\bar\sigma_s|^2)Du_s\rrangle\,ds -\int_t^T\!\!\!  \llangle u_s D(|\sigma_s|^2+|\bar\sigma_s|^2) ,\,Du_s\rrangle\,ds
\\
&\text{(by ($\cA 2$))}\\
&\leq 
- \int_t^T\! \!\! \llangle Du_s,\,(|\sigma_s|^2+|\bar\sigma_s|^2)Du_s\rrangle\,ds
+\frac{C}{\eps_1}\int_t^T  \!\!\!  \|u_s\|^2\, ds + {\eps_1}\int_t^T\!\!\!  \|Du_s\|^2\, ds
\\
&\text{(by ($\cA 1$))}\ \ \\
&\leq
-\kappa  \int_t^T\!\!  \|Du_s\|^2\,ds
+\frac{C}{\eps_1}\int_t^T \!\!\!  \|u_s\|^2\, ds + {\eps_1}\int_t^T\|Du_s\|^2\, ds
,\quad \eps_1>0.
\end{align*}
Using Schwartz inequality, we further have
\begin{align}
\int_t^T \!\!\!   \llangle u_s,\,  2 \sigma_sD\psi_s  \rrangle\,ds
\leq
\frac{K^2}{\eps_2}\int_t^T\|u_s\|^2\, ds
+{\eps_2}\int_t^T
\!\!\!
\|D\psi_s\|^2\, ds, \quad \eps_2>0. \label{est-psi-apriori}
\end{align}
In addition, we have
\begin{align*}
2E\left[ \sup_{\tau\in[t,T]}\bigg| \int_{\tau}^T  \llangle u_s,\, \psi_s\,dW_s \rrangle   \bigg|  \right]
&\leq 4 E \left[ \sup_{\tau\in[t,T]}\bigg| \int_t^{\tau}  \llangle u_s,\, \psi_s\,dW_s\rrangle\bigg|  \right]\\
\text{(by BDG inequality) }
&\leq 
C E\left[ \left(\int_t^T  \|u_s\|^2\|\psi_s\|^2   \,ds \right)^{1/2}\right].
\end{align*}
Notice $\lambda \in [0,1]$ and $$\frac{\kappa\lambda}{2}+ (1-\lambda) \geq 1 \wedge \frac{\kappa}{2}  >0.$$
Incorporating the above estimates and letting $\eps_1=\frac{\kappa}{2}$, we arrive at
\begin{align*}
& \delta E \left[  \sup_{s\in[t,T]} \|u_s\|^2 \right] + (1-\delta) E\left[ \|u_t\|^2 \right]
+E\int_t^T \left( \|\psi_s\|^2 +\|Du_s\|^2\right) \,ds\\
& \leq C\, E\Bigg[ \|G\|^2 
+\int_t^T\left( 1+\frac{1}{\eps_2}\right) \|u_s\|^2
+ \left|\llangle h_s,\, u_s  \rrangle \right|
+\eps_2\|D\psi_s\|^2\,ds\\
&\quad 
+\delta \left(\int_t^T  \|u_s\|^2\|\psi_s\|^2   \,ds \right)^{1/2}
\Bigg]\\
&\leq C\, E\Bigg[ \|G\|^2 
+\int_t^T\left( 1+\frac{1}{\eps_2}\right) \|u_s\|^2
+ \left|\llangle h_s,\, u_s  \rrangle \right|
+\eps_2\|D\psi_s\|^2\,ds
\\
&\quad 
+ \delta \int_t^T  \|\psi_s\|^2   \,ds \Bigg]
+\frac{\delta}{2} E \left[  \sup_{s\in[t,T]} \|u_s\|^2 \right],
\end{align*}
with $\delta \in \{0,1\}$. Applying Gronwall inequality successively for the cases $\delta=0$ and $\delta=1$, we obtain
\begin{align}
& E \left[  \sup_{s\in[t,T]} \|u_s\|^2 \right] 
+E\int_t^T \left(  \|\psi_s\|^2   +\|Du_s\|^2\right)\,ds    \nonumber\\
&\leq
C\, E\Bigg[ \|G\|^2 + \int_t^T
\!\!\!\!
 \left|\llangle h_s,\, u_s  \rrangle \right| \,ds
+\frac{1}{\eps_2} \int_t^T \!\!\!   \|u_s\|^2\,ds
+\eps_2\int_t^T  \!\!\! \|D\psi_s\|^2\,ds \Bigg], \label{est-step1}
\end{align}
with the constant $C$ depending only on $\kappa$, $K$ and $T$.


\tbf{Step 2.} Taking the spatial derivatives on both sides of BSPDE \eqref{BSPDE-NP-linear}, one can easily check that $(v,\Psi){\triangleq}(Du,D\psi)$ is a weak solution of the following Dirichlet problem\footnote{In view of Definition \ref{defn-solution}, a strong solution satisfies the associated BSPDE in the weak/distributional sense as a Sobolev space-valued random function. In fact, it always makes sense to differentiate a function in Sobolev space as the derivative can be well defined in the distributional sense; in particular, for the strong solution $(u,\psi)$, we have $(u,\psi)\in\mathcal H^1$ according to Definition \ref{defn-solution}, then it follows that $(Du,D\psi)\in\mathcal H$. Therefore, we take spatial derivatives and write the resulting equation in a straightforward way.}:
\begin{equation}\label{BSPDE-DP-linear}
  \left\{\begin{array}{l}
  \begin{aligned}
  -dv_t(x)=\, &\bigg[ \frac{\lambda}{2}\left( | \sigma_t(x)|^2+|\bar\sigma_t(x)|^2\right)D^2 v_t(x)
        + \lambda \sigma_t(x) D \Psi_t(x)\\
        &
         +\frac{\lambda}{2}D\left( | \sigma_t(x)|^2+|\bar\sigma_t(x)|^2\right)Dv_t(x)+ \lambda D\sigma_t(x)  \Psi_t(x) 
         \\
         &
         +\frac{1-\lambda}{2}D^2v_t(x)+Dh_t(x)
         \bigg]\, dt  
        -\Psi_t(x)\, dW_{t};\\
        v_t(0)=\,&0,\quad v_t(b)=0;\\
    v_T(x)=\,& DG(x).
    \end{aligned}
  \end{array}\right.
\end{equation}
Applying again It\^o's formula (Lemma \ref{lem-ito-formula} in the Appendix A) to the square norm yields
\begin{align*}
&\|v_t\|^2+\int_t^T\|\Psi_s\|^2\, ds-\|DG\|^2\\
&= \int_t^T \big \langle v_s,\, \lambda \big[
(|\sigma_s|^2+|\bar\sigma_s|^2)D^2v_s 
+ 2 \sigma_sD\Psi_s 
+D\left( | \sigma_s|^2+|\bar\sigma_s|^2\right)Dv_s
\\
&\quad\quad\quad\quad\quad
+2D\sigma_s  \Psi_s\big] +(1-\lambda) D^2u_s
    \big\rangle\,ds\\
&\quad +2\int_t^T \llangle v_s,  Dh_s\rrangle\,ds -2\int_t^T\llangle v_s,\, \Psi_s\,dW_s\rrangle,\quad \text{a.s. }\forall t\in[0,T].
\end{align*}
In view of the zero-Dirichlet condition, one has 
$$\int_t^T\llangle v_s,\,(1-\lambda) D^2v_s \rrangle = -(1-\lambda)\int_t^T \|Dv_s\|^2\,ds,$$
\begin{align}
&\int_t^T \Big\langle v_s,\,
(|\sigma_s|^2+|\bar\sigma_s|^2)D^2v_s 
+ 2 \sigma_sD\Psi_s 
+D\left( | \sigma_s|^2+|\bar\sigma_s|^2\right)Dv_s\nonumber \\
&\quad\quad\quad\quad
+2D\sigma_s  \Psi_s
    \Big\rangle\,ds
    				\nonumber \\
   & =
    -\int_t^T \llangle Dv_s,\,
(|\sigma_s|^2+|\bar\sigma_s|^2)Dv_s 
+ 2 \sigma_s\Psi_s 
    \rrangle\,ds
    			  \label{gradient-0}\\
    &\leq
    -\int_t^T \left( \|
\bar\sigma_s Dv_s\|^2
    -\eps_3 \|\sigma_sDv_s\|^2\right)
   \,ds
   + \frac{1}{1+\eps_3} \int_t^T \|\Psi_s\|^2\,ds
   										\label{gradient-1}\\
 & \text{ (by $(\cA 1)$ and $(\cA 2)$) }   \nonumber \\
&\leq
  -(\kappa-\eps_3K^2)\int_t^T
  \|\sigma_s Dv_s\|^2 
   \,ds
   + \frac{1}{1+\eps_3} \int_t^T \|\Psi_s\|^2\,ds,\quad \eps_3>0,
   \label{gradient-2}
\end{align}
and 
\begin{align*}
\int_t^T \llangle v_s,\,Dh_s
    \rrangle\,ds
    = -\int_t^T \llangle Dv_s,\,h_s
    \rrangle\,ds.
\end{align*}

Taking $\eps_3=\frac{\kappa}{2K^2}$ and in a similar way to \tbf{Step 1}, we get
\begin{align*}
& E \left[  \sup_{s\in[t,T]} \|v_s\|^2 \right] 
+E\int_t^T \left(    \|Dv_s\|^2+\|\Psi_s\|^2  \right)\,ds    
\\
&\leq
C\, E\Bigg[ \|DG\|^2 + \int_t^T \left|\llangle h_s,\, Dv_s  \rrangle \right| \,ds
 \Bigg],
\end{align*}
i.e.,
\begin{align*}
 &E \left[  \sup_{s\in[t,T]} \|Du_s\|^2 \right] 
+E\int_t^T  \left(    \|D^2u_s\|^2+\|D\psi_s\|^2  \right)\,ds    
\\
&\leq
C\, E\Bigg[ \|DG\|^2 + \int_t^T \left|\llangle h_s,\, D^2u_s  \rrangle \right| \,ds
 \Bigg],
\end{align*}
which, together with \eqref{est-step1}, implies
\begin{align*}
& E \left[  \sup_{s\in[t,T]} \|u_s\|_{H^{1,2}([0,b])}^2 \right] 
+E\int_t^T   \left(\|u_s \|^2_{H^{2,2}([0,b])}  +\| \psi_s\|_{H^{1,2}([0,b])}^2 \right) \,ds    \nonumber\\
&\leq
C\, E\Bigg[ \|G\|_{H^{1,2}([0,b])}^2 
 + \int_t^T \left(
 \left|\llangle h_s,\, u_s  \rrangle \right| +\left|\llangle h_s,\, D^2u_s  \rrangle \right|
  \right)\,ds\\&
+\frac{1}{\eps_2} \int_t^T\|u_s\|^2\,ds
+\eps_2\int_t^T \|D\psi_s\|^2\,ds \Bigg],
\end{align*}
with $C$ depending only on $\kappa$, $K$ and $T$.

Noticing that
\begin{align*}
 \int_t^T \!\!\!   \left(
 \left|\llangle h_s,\, u_s  \rrangle \right| +\left|\llangle h_s,\, D^2u_s  \rrangle \right|
  \right)\,ds
  \leq 
  \int_t^T\!\!\! \left(\frac{2}{\eps_2} \|h_s\|^2+2\eps_2\|u_s\|_{H^{2,2}([0,b])}^2       \right) ds
\end{align*}
and letting $\eps_2$ be small enough, one obtains for any $t\in[0,T]$,
\begin{align}
& E \left[  \sup_{s\in[t,T]} \|u_s\|_{H^{1,2}([0,b])}^2 \right] 
+E\int_t^T   \left(\|u_s \|^2_{H^{2,2}([0,b])}  +\| \psi_s\|_{H^{1,2}([0,b])}^2 \right) \,ds    \nonumber\\
&\leq
C\, E\Bigg[ \|G\|_{H^{1,2}([0,b])}^2 
 + \int_t^T \left(
 \left|\llangle h_s,\, u_s  \rrangle \right| +\left|\llangle h_s,\, D^2u_s  \rrangle \right|
  \right)\,ds
\Bigg]     \nonumber \\
&\leq
C\, E\Bigg[ \|G\|_{H^{1,2}([0,b])}^2 
 + \int_t^T \|h_s\|^2 \,ds
\Bigg] \label{est-prop-t}
\end{align}
with the constants $C$s depending on $\kappa$, $K$ and $T$. The uniqueness follows as an immediate consequence of the estimates and the linearity of the concerned BSPDE. 
\end{proof}

When $\lambda \sigma\equiv 0$,  estimate \eqref{est-psi-apriori} is not needed and we have 
\begin{cor}\label{cor-est-weak}
Let $(\cA 0),\,(\cA 1)$ and $(\cA 2)$ hold with $\lambda\sigma\equiv 0$,  and $h\in\cL^2_{\sF}(0,T;L^2([0,b]))$, $G\in L^2(\Omega,\sF_T;L^{2}([0,b]))$. Suppose $(u,\psi)$ is a weak solution of the Neumann problem \eqref{BSPDE-NP-linear}. Then the weak solution is unique and it holds that
\begin{align*}
\|(u,\psi)\|^2_{\cH}
& \leq C_1\,\left\{
\|G\|^2_{L^2(\Omega,\sF_T;L^{2}([0,b]))} + E\left[ \int_0^T \left| \llangle h_t, \, u_t\rrangle\right| 
\,dt \right]
\right\}\\
&\leq 
C_2\,\left\{
\|G\|^2_{L^2(\Omega,\sF_T;L^{2}([0,b]))} + \|h\|^2_{\cL^2_{\sF}(0,T;L^2([0,b]))}
\right\},
\end{align*}
where the constants $C_1$ and $C_2$ depend only on $\kappa$, $K$ and $T$.
\end{cor}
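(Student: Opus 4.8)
The plan is to repeat \textbf{Step 1} of the proof of Proposition \ref{prop-apriori}, the whole point of the hypothesis $\lambda\sigma\equiv0$ being that it deletes precisely the term that forced the higher-order argument there. Since $\lambda\sigma\equiv0$ also gives $\lambda|\sigma|^2=(\lambda\sigma)\sigma\equiv0$ and $\lambda\sigma_tD\psi_t\equiv0$, equation \eqref{BSPDE-NP-linear} collapses to a backward equation whose only second-order term is $a_t(x)D^2u_t(x)$, with $a_t(x)\triangleq\frac{\lambda}{2}|\bar\sigma_t(x)|^2+\frac{1-\lambda}{2}$, and \emph{no} first-order-in-$\psi$ term. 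By $(\cA1)$ and $(\cA2)$ the coefficient $a_t$ is essentially bounded, its spatial derivative $Da_t=\lambda\bar\sigma_tD\bar\sigma_t$ is essentially bounded by $K^2$, and $a_t\geq\frac{1\wedge\kappa}{2}$ uniformly in $\lambda\in[0,1]$. Consequently the estimate should close entirely at the $\cH$-level, with no recourse to the gradient equation \eqref{BSPDE-DP-linear}.

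The key step is to apply the Gelfand-triple It\^o formula of Lemma \ref{lem-ito-formula} to the square norm, now for a \emph{weak} solution $(u,\psi)\in\cH$. Here $u_s\in H^{1,2}([0,b])$ while $a_sD^2u_s$ is read as an element of the dual of $H^{1,2}([0,b])$, so $\langle a_sD^2u_s,u_s\rangle$ is the duality pairing; because the homogeneous Neumann condition is the natural one realized over all of $H^{1,2}([0,b])$, integration by parts produces no boundary contribution and
\begin{align*}
\langle a_sD^2u_s,\,u_s\rangle=-\langle Du_s,\,a_sDu_s\rangle-\langle u_sDa_s,\,Du_s\rangle.
\end{align*}
The square-norm identity then reads
\begin{align*}
\|u_t\|^2+\int_t^T\|\psi_s\|^2\,ds
&=\|G\|^2+2\int_t^T\langle h_s,u_s\rangle\,ds-2\int_t^T\langle u_s,\psi_s\,dW_s\rangle\\
&\quad-2\int_t^T\big(\langle Du_s,a_sDu_s\rangle+\langle u_sDa_s,Du_s\rangle\big)\,ds.
\end{align*}
Coercivity gives $\langle Du_s,a_sDu_s\rangle\geq\frac{1\wedge\kappa}{2}\|Du_s\|^2$; the cross term $\langle u_sDa_s,Du_s\rangle$ is absorbed by Young's inequality using the boundedness of $Da_s$ (at the cost of $\frac{C}{\eps_1}\|u_s\|^2+\eps_1\|Du_s\|^2$); and the stochastic integral is handled by the Burkholder--Davis--Gundy inequality exactly as in Step 1, absorbing $\tfrac12E\sup_s\|u_s\|^2$. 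Choosing $\eps_1$ a fixed small fraction of the coercivity constant and applying Gronwall's inequality yields the first asserted bound (constant $C_1$). The \emph{decisive} difference from Proposition \ref{prop-apriori} is that, with $\lambda\sigma_tD\psi_t$ absent, the quantity estimated in \eqref{est-psi-apriori} never arises, so no $\eps\|D\psi_s\|^2$ pollutes the right-hand side and the inequality is self-contained.

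The second inequality (constant $C_2$) then follows by bounding $|\langle h_s,u_s\rangle|\leq\frac12\|h_s\|^2+\frac12\|u_s\|^2$ and absorbing $\int_t^T\|u_s\|^2\,ds$ through a final Gronwall argument; uniqueness is immediate, since the difference of two weak solutions solves \eqref{BSPDE-NP-linear} with $G\equiv0$ and $h\equiv0$, whence the $C_2$-bound forces it to vanish in $\cH$. The one point needing genuine care, rather than routine repetition, is the justification that Lemma \ref{lem-ito-formula} applies to a weak solution carrying only $u\in\cL^2_{\sF}(0,T;H^{1,2}([0,b]))$, with $D^2u$ understood distributionally, and that realizing the homogeneous Neumann problem over $H^{1,2}([0,b])$ (rather than $H^{1,2}_0([0,b])$) is exactly what suppresses the boundary terms in the integration by parts above. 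Once this is granted, everything else is Step 1 with the $\sigma$-dependent terms set to zero, which is precisely why no analogue of Step 2 is required.
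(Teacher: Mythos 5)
Your proposal is correct and follows exactly the route the paper intends: the corollary is stated immediately after the observation that when $\lambda\sigma\equiv 0$ the estimate \eqref{est-psi-apriori} is never needed, so the paper's (implicit) proof is precisely a rerun of \textbf{Step 1} of Proposition \ref{prop-apriori} with the $\sigma$-terms deleted, closing at the $\cH$-level without the gradient equation \eqref{BSPDE-DP-linear}. Your added care about applying Lemma \ref{lem-ito-formula} in the Gelfand triple $H^{1,2}([0,b])\hookrightarrow L^2([0,b])\hookrightarrow (H^{1,2}([0,b]))'$ for a merely weak solution, and about realizing the homogeneous Neumann condition over $H^{1,2}([0,b])$ so that no boundary terms appear, is exactly the point the paper itself emphasizes in Remark \ref{rmk-linear}.
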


\begin{rmk}\label{rmk-linear}
When $\sigma$ is not vanishing, for a weak solution $(u,\psi)$, the estimate \eqref{est-psi-apriori} makes no sense. In fact, the term $\int_t^T \llangle u_s,\,  2 \sigma_sD\psi_s  \rrangle\,ds$ is not well-defined. Even when we apply the integration-by-parts formula, the function $ \psi$ has no intrinsic meaning on the boundary, nor does the term $u\sigma\psi$, because they are just restrictions to the boundary of $L^2([0,b])$ functions. Thus, for the Neumann problems like \eqref{BSPDE-NP} and \eqref{BSPDE-NP-linear}, It\^o's formula for the square norm is not applicable to the weak solutions when $\sigma$ is not vanishing, and this makes the existing methods for weak solutions inapplicable here.
\end{rmk}

\begin{rmk}\label{rmk-superparabolic}
If we explore in more detail the calculations  from \eqref{gradient-0} through \eqref{gradient-2},  we may see how the assumptions $(\cA 0)$ and $(\cA 1)$ respectively on adaptedness and superparabolicity contribute to the gradient estimates (\eqref{est-prop-t} for instance) and thus the improved regularity of solutions that is in demand to apply the It\^o-Kunita-Wentzell formula of Lemma \ref{lem-ito-wentzell} for the verification in the proof of Theorem \ref{thm-verification}. In particular, if there is only one Wiener process $W$ in the original control problem (see \eqref{min-contrl-probm} and \eqref{state-proces-contrl}) and we assume all the coefficients are adapted to the filtration $\sF$ generated by $W$, then the control problem turns out to be equivalent to the case when $\bar{\sigma}\equiv 0$ and this will make calculations  \eqref{gradient-0}-\eqref{gradient-2} and the gradient estimates in \eqref{est-prop-t} invalid. In this sense, it also explains why we have two Wiener processes $W$ and $B$ and use the coefficients associated with $B$ to construct the superparabolicity. 
\end{rmk}

\subsection{Existence and uniqueness of a strong solution (Proof of Theorem \ref{thm-Neumann-nonlinear})} \label{subsec:general nonlinear}


First, we consider the following Neumann problem with Laplacian operator:
\begin{equation}\label{BSPDE-NP-model}
  \left\{\begin{array}{l}
  \begin{aligned}
  -du_t(x)=\, &\left[ D^2 u_t(x)
       +h_t(x)
         \right]\, dt   -\psi_t(x)\, dW_{t}, 
        \\
        &\quad (t,x)\in [0,T]\times [0,b];\\
        Du_t(0)=\,&0,\quad Du_t(b)=0;\\
    u_T(x)=\,& G(x),  \quad x\in[0,b].
    \end{aligned}
  \end{array}\right.
\end{equation}

\begin{prop}\label{prop-model}
Let $$h\in\cL^2_{\sF}(0,T;L^2([0,b])) \quad \text{and}\quad G\in L^2(\Omega,\sF_T;H^{1,2}([0,b])).$$ BSPDE \eqref{BSPDE-NP-model} admits a unique strong solution $(u,\psi)$.
\end{prop}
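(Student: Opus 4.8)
The plan is to obtain uniqueness directly from the a priori estimate of Proposition \ref{prop-apriori} and to construct a solution by a spectral (Galerkin) decomposition in the eigenbasis of the Neumann Laplacian on $[0,b]$. Proposition \ref{prop-apriori} applies here because \eqref{BSPDE-NP-model} is the instance of the linear problem \eqref{BSPDE-NP-linear} with $\lambda=1$, $\sigma\equiv0$ and $\bar\sigma\equiv\sqrt{2}$, so that $(\cA 1)$ holds with $\kappa=2$. Uniqueness is then immediate: the difference of two strong solutions is a strong solution of \eqref{BSPDE-NP-model} with $G=0$ and $h=0$, and the $\cH$-estimate forces it to vanish.

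For existence I would use the orthonormal basis of $L^2([0,b])$ given by the Neumann eigenfunctions $e_0\equiv b^{-1/2}$ and $e_n(x)=(2/b)^{1/2}\cos(n\pi x/b)$ for $n\geq1$, which satisfy $D^2e_n=-\lambda_n e_n$ with $\lambda_n=(n\pi/b)^2$ and $De_n(0)=De_n(b)=0$. Writing $G^n=\langle G,e_n\rangle\in L^2(\Omega,\sF_T;\bR)$ and $h^n_t=\langle h_t,e_n\rangle\in\cL^2_{\sF}(0,T;\bR)$, projecting \eqref{BSPDE-NP-model} onto $e_n$ decouples it into the scalar linear BSDEs
\begin{equation*}
-du^n_t=\bigl[-\lambda_n u^n_t+h^n_t\bigr]\,dt-\psi^n_t\,dW_t,\qquad u^n_T=G^n,
\end{equation*}
each of which has a unique solution $(u^n,\psi^n)$ by standard linear BSDE theory, with $u^n_t=E[e^{-\lambda_n(T-t)}G^n+\int_t^T e^{-\lambda_n(s-t)}h^n_s\,ds\,|\,\sF_t]$. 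I then set $u^N_t=\sum_{n=0}^N u^n_t e_n$ and $\psi^N_t=\sum_{n=0}^N \psi^n_t e_n$. Since the Laplacian is diagonal in $\{e_n\}$, a direct computation shows that each pair $(u^N,\psi^N)$ is a strong solution of \eqref{BSPDE-NP-model} with data $(P_N G,P_N h)$, where $P_N$ is the orthogonal projection onto the span of $e_0,\dots,e_N$; in particular every $Du^N_t$ has vanishing trace at $0$ and $b$.

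Applying Proposition \ref{prop-apriori} to the differences $u^N-u^M$, which solve \eqref{BSPDE-NP-model} with data $(P_NG-P_MG,\,P_Nh-P_Mh)$, reduces the matter to the convergence of the data projections: $P_N h\to h$ in $\cL^2_{\sF}(0,T;L^2([0,b]))$ holds by completeness, and $P_N G\to G$ in $L^2(\Omega,\sF_T;H^{1,2}([0,b]))$ because the cosine series of an $H^{1,2}$ function converges in $H^{1,2}$ (the boundary terms in the integration by parts vanish automatically, as $\sin$ vanishes at the endpoints). Hence $(u^N,\psi^N)$ is Cauchy in $\cH^1$, and its limit $(u,\psi)\in\cH^1$ inherits the a priori bound. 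It then remains to check that $(u,\psi)$ is a strong solution in the sense of Definition \ref{defn-solution}: passing to the limit in the weak formulation \eqref{eq-defn-2} is routine, while the boundary condition follows since $u^N\to u$ in $\cL^2_{\sF}(0,T;H^{2,2}([0,b]))$ gives, along a subsequence, $Du^N_t\to Du_t$ in $H^{1,2}([0,b])$ for a.e.\ $(\omega,t)$, and the trace map $H^{1,2}([0,b])\to\bR^2$ is continuous, so $Du_t(0)=Du_t(b)=0$.

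The step I expect to be the main obstacle is this last passage to the limit while simultaneously preserving the Neumann boundary condition and the $H^{2,2}$-regularity of $u$; the a priori estimate of Proposition \ref{prop-apriori} carries the analytic weight, so that the one genuinely delicate point is the $H^{1,2}$-convergence of the cosine expansion of $G$, which (unlike $H^{2,2}$-convergence) requires no compatibility condition at the boundary.
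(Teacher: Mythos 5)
Your proof is correct, but it takes a genuinely different route from the paper's. The paper constructs the solution by first assuming extra regularity ($h\in\cL^2_{\sF}(0,T;H^{1,2}([0,b]))$, $DG\in L^2(\Omega,\sF_T;H_0^{1,2}([0,b]))$), solving the deterministic parabolic Neumann problem pathwise with the $\sF_T$-measurable data (citing Lieberman), producing the adapted pair $(u,\psi)$ by taking conditional expectations in Hilbert space, upgrading from weak to strong solution by differentiating to a Dirichlet problem, and only then approximating general data and invoking the a priori estimates of Proposition \ref{prop-apriori} to pass to the limit. You instead diagonalize the Neumann Laplacian in the cosine eigenbasis, reduce to explicitly solvable scalar linear BSDEs, and obtain the Cauchy property of the Galerkin approximants directly from Proposition \ref{prop-apriori}; since each $(u^N,\psi^N)$ is automatically smooth in $x$ and satisfies the boundary condition exactly, you bypass both the deterministic PDE input and the two-stage regularization. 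The one analytic point your route hinges on --- that the Neumann (cosine) expansion of an $H^{1,2}([0,b])$ function converges in $H^{1,2}$ with no boundary compatibility condition, because $DP_N\zeta$ is exactly the $N$-th partial sum of the sine expansion of $D\zeta$ --- is correctly identified and correct. Two small remarks: you should note explicitly that Proposition \ref{prop-apriori} applies to your constant-coefficient instance because $\sigma\equiv0$, $\bar\sigma\equiv\sqrt{2}$ trivially satisfy $(\cA 1)$ and $(\cA 2)$ (the paper makes the same implicit move when it derives uniqueness for \eqref{BSPDE-NP-model} from Proposition \ref{prop-apriori}); and the uniform operator bound $\|P_N\zeta\|_{H^{1,2}}\le\|\zeta\|_{H^{1,2}}$, which follows since both the cosine and sine partial-sum maps are orthogonal projections, is what lets you upgrade the pointwise convergence $P_NG(\omega)\to G(\omega)$ to convergence in $L^2(\Omega,\sF_T;H^{1,2}([0,b]))$ by dominated convergence. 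What your approach buys is self-containedness and explicitness; what the paper's buys is independence from the special product structure of the domain, so it would survive on domains where no convenient eigenbasis is available.
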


\begin{proof} The uniqueness of strong solution follows directly from Proposition \ref{prop-apriori}. We need only to prove the existence.
\tbf{Step 1.} Suppose further $h\in \cL^2_{\sF}(0,T;H^{1,2}([0,b]))$ and $DG\in L^2(\Omega,\sF_T; H_0^{1,2}([0,b]))$.
By the theory on the Neumann problem of deterministic parabolic PDEs (see \cite[Theorem 7.20]{Lieberman}), there exists a unique strong solution $\hat{u}$ to PDE: 
\begin{equation}\label{BSPDE-NP-model-d}
  \left\{\begin{array}{l}
  \begin{aligned}
  -\partial_t \hat u_t(x)=\, & D^2 \hat u_t(x)
       +h_t(x), 
        \quad\quad \quad (t,x)\in [0,T]\times [0,b];\\
        D\hat u_t(0)=\,&0,\quad D\hat u_t(b)=0;\\
    \hat u_T(x)=\,& G(x),  \quad x\in[0,b],
    \end{aligned}
  \end{array}\right.
\end{equation}
such that $\hat u, D\hat u, D^2\hat u, \partial_t \hat u\in L^2(\Omega,\sF_T; L^2([0,T]\times [0,b]))$. Taking conditional expectations in Hilbert spaces (see \cite{DaPrato1992}), set
$$u_t=E\left[ \hat u_t \big| \sF_t\right],\quad \text{a.s., for each }t\in[0,T],$$
which admits a version in $\cS^2_{\sF}(0,T; L^2([0,b]))\cap\cL^2_{\sF}(0,T; H^{2,2}([0,b]))$ that together with $\psi\in\cL^2_{\sF}(0,T; L^2([0,b]))$ satisfies $L^2([0,b])$-valued BSDE:
\begin{equation}\label{BSDE-step1}
  \left\{\begin{array}{l}
  \begin{aligned}
  -du_t(x)=\, &\left[ D^2 u_t(x)
       +h_t(x)
         \right]\, dt   -\psi_t(x)\, dW_{t};\\
    u_T(x)=\,& G(x),  \quad x\in[0,b].
    \end{aligned}
  \end{array}\right.
\end{equation}
In view of the definition of $u$, $u$ satisfies the zero-Neumann boundary condition. By Definition \ref{defn-solution} and Corollary \ref{cor-est-weak}, it is easy to check that $(u,\psi)$ is the weak solution to BSPDE \eqref{BSPDE-NP-model}.

\tbf{Step 2.} We now prove that the constructed weak solution $(u,\psi)$ is in fact the unique strong solution of BSPDE \eqref{BSPDE-NP-model}. In a similar way to \tbf{Step 1}, it is easy to check that $D\hat u$ would be the strong solution of Dirichlet problem: 
\begin{equation}\label{BSPDE-NP-model-d2}
  \left\{\begin{array}{l}
  \begin{aligned}
  -\partial_t \hat v_t(x)=\, & D^2 \hat v_t(x)
       +Dh_t(x), 
        \quad\quad \quad (t,x)\in [0,T]\times [0,b];\\
        \hat v_t(0)=\,&0,\quad \hat v_t(b)=0;\\
    \hat v_T(x)=\,& DG(x),  \quad x\in[0,b],
    \end{aligned}
  \end{array}\right.
\end{equation}
and $(Du,D\psi)$ satisfies $L^2([0,b])$-valued BSDE \eqref{BSDE-step1} associated to the coefficients $(Dh, DG)$. In particular, we have 
$$(Du,D\psi)\in \cS^2_{\sF}(0,T;L^2([0,b])) \times \cL^2_{\sF}(0,T;L^2([0,b]))$$ 
and thus $(u,\psi)$ is the strong solution to BSPDE \eqref{BSPDE-NP-model}. For general $h\in\cL^2_{\sF}(0,T;L^2([0,b]))$ and $G\in L^2(\Omega,\sF_T; H^{1,2}([0,b]))$, we may choose a sequence $$\{(h^n,G^n)\}_{n\in\bN^+}\subset   \cL^2_{\sF}(0,T;H^{1,2}([0,b])) \times L^2(\Omega,\sF_T; H^{1,2}([0,b])) $$ with $\{DG^n\}_{n\in\bN^+} \subset L^2(\Omega,\sF_T; H_0^{1,2}([0,b]))$ such that $(h^n,G^n)$ converges to $(h,G)$ in  $\cL^2_{\sF}(0,T;L^2([0,b])) \times L^2(\Omega,\sF_T; H^{1,2}([0,b]))$. For each $(h^n,G^n)$, we get the corresponding strong solution $(u^n,\psi^n)$. Then the estimates in Proposition \ref{prop-apriori} yield the convergence of $(u^n,\psi^n)$ as well as the existence of strong solution. 
\end{proof}


Now, we may use the continuity method to prove Theorem \ref{thm-Neumann-nonlinear}. 
\begin{proof}[Proof of Theorem \ref{thm-Neumann-nonlinear}]
\tbf{Step 1.}
For each $h\in \cL^2_{\sF}(0,T;L^2([0,b]))$ and $\lambda \in [0,1]$, consider the following BSPDE
\begin{equation}\label{BSPDE-NP-lambda}
  \left\{\begin{array}{l}
  \begin{aligned}
  -du_t(x)=\, &\bigg\{\lambda \bigg[ \frac{1}{2}\left( | \sigma_t(x)|^2+|\bar\sigma_t(x)|^2\right)D^2 u_t(x)
        + \sigma_t (x)D \psi_t(x)\\&+\Gamma_t(x,u,Du,D^2u,\psi,D\psi)
         \bigg]   + (1-\lambda)D^2u_t(x)+h_t(x)
         \bigg\}\, dt  \\
    &    -\psi_t(x)\, dW_{t}, \quad (t,x)\in [0,T]\times [0,b];\\
        Du_t(0)=\,&0,\quad Du_t(b)=0;\\
    u_T(x)=\,& G(x),  \quad x\in[0,b].
    \end{aligned}
  \end{array}\right.
\end{equation}
Note that BSPDE \eqref{BSPDE-NP-nonlinear} corresponds to the special case when $\lambda=1$ and $h\equiv 0$.
We will generalize the a priori estimates from the linear case of Proposition \ref{prop-apriori} to nonlinear equation \eqref{BSPDE-NP-lambda}. Suppose $(u,\psi)$ is a strong solution of BSPDE \eqref{BSPDE-NP-lambda}. Applying Proposition \ref{prop-apriori} to each $t\in[0,T]$ (see also estimates \eqref{est-step1} and \eqref{est-prop-t}), we have by $(\cA 3)$
\begin{align}
& E \left[  \sup_{s\in[t,T]} \|u_s\|^2 \right] 
+E\int_t^T \|\psi_s\|^2 + \|Du_s\|^2  \,ds    \nonumber\\
&\leq
C\, E\Bigg[ \|G\|^2 + \int_t^T \left|\llangle h_s+\lambda\Gamma_s(\cdot,u,Du,D^2u,\psi,D\psi),\, u_s  \rrangle \right| \,ds
\nonumber\\
&\quad\quad\quad\quad
   +\frac{1}{\eps} \int_t^T\|u_s\|^2\,ds
+\eps\int_t^T \|D\psi_s\|^2\,ds \Bigg] \nonumber  \\
&\leq
C\, E\Bigg[ \|G\|^2 +\eps \int_t^T\left( \|h_s\|^2+ \left\|  \Gamma_s(\cdot,u,Du,D^2u,\psi,D\psi)\right\|^2+ \|D\psi_s\|^2\right)    \,ds
\nonumber\\
&\quad\quad\quad\quad
+\frac{3}{\eps} \int_t^T\|u_s\|^2\,ds
 \Bigg]   \nonumber \\
&\leq
C_1\, E\Bigg[ \|G\|^2 +\eps \int_t^T \left( \|\Gamma^0_s\|^2+\|h_s\|^2+\|u_s\|^2+\|Du_s\|^2+\|\psi_s\|^2  \right)    \,ds
\nonumber\\
&\quad\quad\quad\quad
+\frac{3}{\eps} \int_t^T\|u_s\|^2\,ds   
+\int_t^T \eps(1+\mu^2)\|D\psi_s\|^2+\eps\mu^2\|D^2u_s\|^2\,ds \Bigg] ,
\label{thm-est-1}
\end{align}
and
\begin{align}
& E \left[  \sup_{s\in[t,T]} \|u_s\|_{H^{1,2}([0,b])}^2 \right] 
+E\int_t^T   \left(\|u_s \|^2_{H^{2,2}([0,b])}  +\| \psi_s\|_{H^{1,2}([0,b])}^2 \right) \,ds    \nonumber\\
&\leq
C\, E\Bigg[ \|G\|_{H^{1,2}([0,b])}^2 
 + \int_t^T  \left|  \llangle h_s+  \lambda \Gamma_s(\cdot,u,Du,D^2u,\psi,D\psi),\,u_s\rrangle \right| \,ds  \nonumber \\
&\quad\quad\quad\quad +\int_t^T \left|  \llangle   h_s+\lambda \Gamma_s(\cdot,u,Du,D^2u,\psi,D\psi),\,D^2u_s\rrangle \right| \,ds
\Bigg] \nonumber\\
&\leq
C \, E\Bigg[ \|G\|_{H^{1,2}([0,b])}^2 
 \nonumber\\
&\quad\quad
 +\left(1+\frac{1}{\eps}\right) \int_t^T \left(\| \Gamma^0_s\|^2 +\|h_s\|^2+ \|u_s\|^2 + \|Du_s\|^2 +\|\psi_s\|^2     \right)\,ds \Bigg]   \nonumber \\
&\quad\quad  +\int_t^T E\left[  \left(\mu^2+\eps\right)\|D^2u_s\|^2 +(\mu^2+\eps) \|D\psi_s\|^2   \right]  \,ds ,
 \label{thm-est-2}
\end{align}
with $C$s depending on $\kappa,K,L$ and $T$. Letting $\eps<\frac{1}{2C_1+1}$, we have by \eqref{thm-est-1},
\begin{align}
& E \left[  \sup_{s\in[t,T]} \|u_s\|^2 \right] 
+E\int_t^T \left(\|\psi_s\|^2 + \|Du_s\|^2\right) \,ds    \nonumber\\
&\leq 
C_2\, E\Bigg[ \|G\|^2 
\nonumber\\
&\quad
+\int_t^T\!\!\!
\left(   \|\Gamma^0_s\|^2  +\|h_s\|^2 
+\frac{2}{\eps} \|u_s\|^2  + \eps(1+\mu^2)\|D\psi_s\|^2+\eps\mu^2\|D^2u_s\|^2\right)\,ds \Bigg], \label{thm-est-001}
\end{align}
with $C_2$ independent of $(\eps,\,\mu)$.
From \eqref{thm-est-001} and \eqref{thm-est-2}, it follows that, there exists $\mu_0$ depending on $\kappa$, $K$, $L$ and $T$ such that 
when $\mu<\mu_0$, letting $\eps$ be small enough and using Gronwall inequality yield
\begin{align}
&\|(u,\psi)\|_{\cH^1}
\nonumber \\
&\leq C\left(
\|G\|_{L^2(\Omega,\sF_T;H^{1,2}([0,b]))} 
+\left\|\Gamma^0 \right\|_{\cL^2_{\sF}(0,T;L^2([0,b]))}
+\|h\|_{\cL^2_{\sF}(0,T;L^2([0,b]))}
\right), \label{thm-est-apriori} 
\end{align}
 with the constant $C$ depending on $\mu$, $L$, $\kappa$, $K$ and $T$.

\tbf{Step 2.} Suppose $(u_1,\,\psi_1)$ and $(u_2,\,\psi_2)$ are two strong solutions of BSPDE \eqref{BSPDE-NP-lambda}. Then, the pair $(\delta u,\, \delta \psi) = (u_1-u_2,\,\psi_1-\psi_2)$ satisfies the following BSPDE:
\begin{equation}\label{BSPDE-NP-delta}
  \left\{\begin{array}{l}
  \begin{aligned}
  -d\delta u_t(x)=\, &\bigg\{\lambda \bigg[ \frac{1}{2}\left( | \sigma_t (x)|^2+|\bar\sigma_t (x)|^2\right)D^2 \delta u_t(x)
        + \sigma_t(x) D\delta  \psi_t(x)\\
        +\Gamma_t(&x,u_1,Du_1,D^2u_1,\psi_1,D\psi_1) 
         - \Gamma_t(x,u_2,Du_2,D^2u_2,\psi_2,D\psi_2)
         \bigg] \\&
         + (1-\lambda)D^2\delta u_t(x)
         \bigg\}\, dt  
        -\delta \psi_t(x)\, dW_{t} ;\\
        D\delta u_t(0)=\,&0,\quad D\delta u_t(b)=0;\\
    \delta u_T(x)=\,& 0.
    \end{aligned}
  \end{array}\right.
\end{equation}
Recalling $({\mathcal A} 3)$, we have
\begin{align*}
	&\left\|\Gamma_t(\cdot, u_1,Du_1,D^2u_1,\psi_1,D\psi_1 )  - \Gamma_t(\cdot, u_2,Du_2,D^2u_2,\psi_2,D\psi_2 )  \right\| \\
	& \leq 
	\mu \left( \left\|D^2(u_1-u_2)\right\| + \left\|D(\psi_1-\psi_2) \right\|\right) 
	\\
	&\quad+ L \left(  \left\|    u_1-u_2  \right\|_{H^{1,2}([0,b])}     +   \left\|    \psi_1-\psi_2  \right\|_{L^{2}([0,b])}  \right),
	\end{align*} 
a.s. for any $t\in[0,T]$.
In a similar way to \tbf{Step 1}, applying It\^o's formula (Lemma \ref{lem-ito-formula} in the Appendix A) to square norms of $(\delta u,\,\delta \psi)$, one gets estimates  \eqref{thm-est-1}, \eqref{thm-est-2} and \eqref{thm-est-001}, and further \eqref{thm-est-apriori} but with $(G,\Gamma^0,h)$ being replaced by zero values. This indicates the uniqueness of strong solution to BSPDE \eqref{BSPDE-NP-lambda} as well as to BSPDE \eqref{BSPDE-NP-nonlinear}.

\tbf{Step 3.} First, notice that the a priori estimate \eqref{thm-est-apriori} holds with the constant $C$ being independent of $\lambda\in [0,1]$. When $\lambda =0$, Proposition \ref{prop-model} implies that BSPDE \eqref{BSPDE-NP-lambda} admits a unique strong solution $(u,\psi)$. Noticing that when $\lambda=1$, BSPDE  \eqref{BSPDE-NP-lambda} coincides with \eqref{BSPDE-NP-nonlinear},   we then expect to extend the wellposedness of  BSPDE \eqref{BSPDE-NP-lambda} through the interval $ [0,1]$ starting from $\lambda =0$.  

Assume that for some $\lambda=\lambda_0$, BSPDE \eqref{BSPDE-NP-lambda}, satisfying assumptions $(\cA 0)-(\cA 3)$, admits a unique strong solution $(u,\psi)$, which is true when $\lambda_0=0$ (by Proposition \ref{prop-model} as above). Then, for each $(\check u,\check \psi)\in\cH^1$, the following BSPDE
\begin{equation*}
  \left\{\begin{array}{l}
  \begin{aligned}
  -du_t(x)=\, &\bigg\{\lambda_0 \bigg[ \frac{1}{2}\left( | \sigma_t(x)|^2+|\bar\sigma_t(x)|^2\right)D^2 u_t(x)
        + \sigma_t (x)D \psi_t(x)\\&+\Gamma_t(x,u,Du,D^2u,\psi,D\psi)
         \bigg]   \\
        +(\lambda -  &  \lambda_0) 
         \bigg[ \frac{1}{2}\left( | \sigma_t(x)|^2+|\bar\sigma_t(x)|^2\right)D^2 \check u_t(x)
        + \sigma_t (x)D \check \psi_t(x)
        \\&+\Gamma_t(x,\check u,D\check u,D^2\check u,\check \psi,D\check \psi) 
        -D^2\check u_t(x)
        \bigg]\\
         &\quad  + (1-\lambda_0)D^2u_t(x)\bigg\}\, dt  
      -\psi_t(x)\, dW_{t}, 
        \\& (t,x)\in [0,T]\times [0,b];\\
        Du_t(0)=\,&0,\quad Du_t(b)=0;\\
    u_T(x)=\,& G(x),  \quad x\in[0,b],
    \end{aligned}
  \end{array}\right.
\end{equation*}
is a special case of BSDPE \eqref{BSPDE-NP-lambda} with $\lambda=\lambda_0$ and
\[
\begin{split}
h_t(x)&=\left(\lambda -\lambda_0\right)\bigg[ \frac{1}{2}\left( | \sigma_t(x)|^2+|\bar\sigma_t(x)|^2\right)D^2 \check u_t(x)
        + \sigma_t(x) D \check \psi_t(x)\\&+\Gamma_t(x,\check u,D\check u,D^2\check u,\check \psi,D\check \psi) 
        -D^2\check u_t(x)
        \bigg],
\end{split}
\]
 and it has a unique strong solution $(u,\psi)$, and we can define the solution map as follows
\begin{align*}
\mathscr{M}_{\lambda_0}: \quad  \cH^1\rightarrow\cH^1,\quad\quad  (\check u,\,\check\psi) \mapsto (u,\,\psi).
\end{align*}
Then for any $(u_i,\,\psi_i)\in \cH^1$, $i=1,2$, in a similar way to \tbf{Step 2}, we have 
\begin{align*}
&\left\| (u_1-u_2,\,\psi_1-\psi_2)  \right\|_{\cH^1}
\\
&\leq C \big|  \lambda-\lambda_0  \big|\,
\bigg\|  
\frac{1}{2}\left( | \sigma|^2+|\bar\sigma|^2\right)D^2  (\check u_1-\check u_2)-D^2 (\check u_1-\check u_2)
        \\ &\quad\quad\quad  + \sigma D (\check \psi_1-\check \psi_2)
        +\Gamma_{\cdot}(\cdot,\check u_1,D \check u_1,D^2 \check u_1, \check \psi_1,D \check \psi_1) 
        \\
        &\quad\quad\quad -\Gamma_{\cdot}(\cdot,\check u_2,D \check u_2,D^2 \check u_2, \check \psi_2,D \check \psi_2)
\bigg\|_{\cL^2_{\sF}(0,T;L^2([0,b]))}
\\
&\leq \Tilde{C}\big|  \lambda-\lambda_0  \big|\,
\left\|\left(\check u_1-\check u_2,\,\check \psi_1-\check \psi_2\right)\right\|_{\cH^1},
\end{align*}
where the constant $\Tilde C$ does not depend on $(\lambda,\lambda_0)$. If $ \left| \lambda-\lambda_0  \right|<\frac{1}{\Tilde C}$, $\mathscr M_{\lambda_0}$ is a contraction mapping  and it has a unique fixed point $(u,\psi)\in\cH^1$ which is a strong solution of BSPDE \eqref{BSPDE-NP-lambda}. In this way, if BSPDE \eqref{BSPDE-NP-lambda} has a strong solution  for $\lambda_0$, so does it for any $\lambda$ satisfying $ \left| \lambda-\lambda_0  \right|<1/{\Tilde C}$. In finite steps starting from $\lambda=0$, we can reach $\lambda=1$, which together with the estimate \eqref{thm-est-apriori} and the uniqueness obtained in \textbf{Step 2} completes the proof.
\end{proof}


\section{Proof of Theorem \ref{thm-verification}} \label{subsec:proof}

We introduce an It\^o-Kunita-Wentzell formula for the composition of random fields and stochastic differential systems.

\begin{lem}\label{lem-ito-wentzell}
  Let \begin{equation}\label{SDE-wentzell}
X_t=x+\int_0^t\!\xi_r\,dr+\int_0^t dA_s+\int_0^t \rho_r\,dW_r+  \int_ 0^t\bar \rho_r\,dB_r  \quad 0\leq t\leq T,
\end{equation}
 with $(\xi,\rho,\bar{\rho})\in\cL^2_{\bar{\sF}}(0,T;\bR)\times \cL^2_{\bar{\sF}}(0,T;\bR^d) \times \cL^2_{\bar \sF} (0,T;\bR^d)$ and $A$ being an $\bar \sF_t$-adapted continuous bounded variation process satisfying $A_0=0$.
 Suppose $0\leq X_t\leq b$ a.s. and 
 \begin{align*}
    u_t(x)=u_0(x)+\int_0^t\!  q_r(x)\,dr+\int_0^t\!\psi_r(x)\,dW_r,\quad \textrm{for } (t,x)\in[0,T]\times[0,b],
  \end{align*}
  holds in the weak sense with $(u,q,\psi)$ in 
    \begin{align*}
   \left(\cS^2(0,T;H^{2,2}([0,b]))\cap\cL^2(0,T;H^{3,2}([0,b])) \right) \times \cL^2(0,T;H^{1,2}([0,b]))
   \\
   \times \cL^2(0,T;H^{2,2}([0,b])).    
   \end{align*}
   Then, for each $x\in[0,b]$, it holds almost surely that, for all $t\in[0,T]$,
 \begin{align}
     &u_t(X^{0,x}_t)- u_0(x) \label{eq-ito-kunita}  \\
     &
     =
     \!\int_0^t   \left[ q_r(X^{0,x}_r) +\xi_rDu_r(X^{0,x}_r)+\frac{1}{2}\left(|\rho|^2+|\bar\rho|^2\right)D^2u_r(X^{0,x}_r)+  \rho D\psi_r(X^{0,x}_r)  \right]\,dr    \nonumber \\
     &\quad+\int_0^tDu_r(X^{0,x}_r)\,dA_r+\int_0^t\left( \psi_r(X^{0,x}_r)    + Du_r(X^{0,x}_r)\rho_r\right)\,dW_r\nonumber \\
     &\quad +\int_0^t Du_r(X^{0,x}_r)\bar\rho_r\,dB_r. \nonumber
   \end{align}
\end{lem}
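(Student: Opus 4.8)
The plan is to reduce the statement to the classical It\^o--Wentzell (Kunita) formula for smooth random fields by a spatial mollification argument, and then to pass to the limit using the Sobolev embeddings available in one space dimension. Throughout I would exploit that on $[0,b]$ one has the continuous embeddings $H^{3,2}([0,b])\hookrightarrow C^2([0,b])$, $H^{2,2}([0,b])\hookrightarrow C^1([0,b])$ and $H^{1,2}([0,b])\hookrightarrow C([0,b])$, so that the pointwise evaluations $u_r(X_r)$, $Du_r(X_r)$, $D^2u_r(X_r)$, $q_r(X_r)$, $\psi_r(X_r)$ and $D\psi_r(X_r)$ appearing in \eqref{eq-ito-kunita} are all well defined for a.e.\ $r$. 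Concretely, I would first fix a bounded Sobolev extension operator from $H^{k,2}([0,b])$ into $H^{k,2}(\bR)$, extend $u$, $q$, $\psi$ to random fields on $\bR$, and mollify in space by convolution with a kernel $\zeta_\eps$; this produces, for each $\eps>0$, a field $u^\eps_t(x)=u_t*\zeta_\eps(x)$ that is smooth in $x$ and satisfies $u^\eps_t(x)=u^\eps_0(x)+\int_0^t q^\eps_r(x)\,dr+\int_0^t\psi^\eps_r(x)\,dW_r$ with $q^\eps=q*\zeta_\eps$, $\psi^\eps=\psi*\zeta_\eps$. Since $X_t\in[0,b]$ a.s., only the values of these fields on $[0,b]$ are ever probed.

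For the smooth field $u^\eps$ the classical It\^o--Wentzell formula applies to the composition with the semimartingale $X$ of \eqref{SDE-wentzell} and yields \eqref{eq-ito-kunita} with $(u,q,\psi)$ replaced by $(u^\eps,q^\eps,\psi^\eps)$. Two structural points deserve emphasis. The continuous bounded-variation term $A$ satisfies $[A,A]=[A,W]=[A,B]=0$, so it enters only through the first-order Stieltjes integral $\int_0^t Du^\eps_r(X_r)\,dA_r$ and produces no second-order or cross correction. The genuinely new term $\int_0^t\rho_r D\psi^\eps_r(X_r)\,dr$ arises as the joint quadratic covariation of the $W$-martingale part $\int\psi^\eps_r\,dW_r$ of the field with the $W$-martingale part $\int\rho_r\,dW_r$ of $X$; because $u$ is driven by $W$ alone and $W$ is independent of $B$, there is no analogous $\bar\rho$ contribution, which explains why only $\rho$ appears in the correction.

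Finally I would let $\eps\downarrow0$. Mollification converges in the respective Sobolev norms, so by the embeddings above $D^2u^\eps_r\to D^2u_r$ and $D\psi^\eps_r\to D\psi_r$ uniformly on $[0,b]$ for a.e.\ $r$, and likewise for the lower-order terms; combined with the continuity of $r\mapsto X_r$ this gives convergence of the integrands along the path, with Bochner norms controlled uniformly by the hypothesis $(u,q,\psi)\in(\cS^2\cap\cL^2(0,T;H^{3,2}([0,b])))\times\cL^2(0,T;H^{1,2}([0,b]))\times\cL^2(0,T;H^{2,2}([0,b]))$. The drift and $dA$ integrals then converge by dominated convergence in $\cL^1_{\bar{\sF}}(0,T;\bR)$, while for the $dW$- and $dB$-integrals I would invoke It\^o's isometry together with the Burkholder--Davis--Gundy inequality to pass to the limit in $\cL^2_{\bar{\sF}}(0,T;\bR)$. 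Extracting a subsequence yields almost sure convergence, and since both sides of \eqref{eq-ito-kunita} are continuous in $t$, the identity holds a.s.\ simultaneously for all $t\in[0,T]$.

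The step I expect to be the main obstacle is the rigorous treatment of the cross term $\int_0^t\rho_r D\psi_r(X_r)\,dr$. At the level of the smooth approximation it is the only contribution that cannot be obtained by applying ordinary It\^o calculus to the frozen-time map $x\mapsto u_{t}(x)$; its identification rests on controlling the forward-pointing increment $\psi_r(X_{t_{i+1}})-\psi_r(X_r)$ in the time discretization underlying Kunita's formula, whose $dW$-covariation with $\rho_r$ generates precisely this term. Carrying this through in the limit is what forces the comparatively strong hypothesis $\psi\in\cL^2(0,T;H^{2,2}([0,b]))$, ensuring that $D\psi$ is continuous and that the covariation integrand converges along $X$.
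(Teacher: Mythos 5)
Your proposal is correct and follows the same overall strategy as the paper: extend $(u,q,\psi)$ from $[0,b]$ to $\bR$ by a bounded Sobolev extension operator, mollify in space, establish the formula for the mollified field, and pass to the limit using the one-dimensional embeddings $H^{k,2}\hookrightarrow C^{k-1}$ together with It\^o isometry/BDG for the stochastic integrals. The one substantive difference lies in how the It\^o expansion of the mollified composition is obtained. You invoke the classical It\^o--Wentzell (Kunita) formula for smooth semimartingale fields as a black box applied to $u^\eps_t(X_t)$. The paper instead writes the same object as the $L^2(\bR)$-pairing $\int_{\bR}\phi_l(X_t^{0,x}-y)\,\mathcal{E}u_t(y)\,dy$, expands $\phi_l(X_t^{0,x}-y)$ by the ordinary finite-dimensional It\^o formula for each $y$, and then combines the two semimartingales via the Hilbert-space product rule of Remark \ref{rmk-ito-formula} (the polarization of Lemma \ref{lem-ito-formula}); the cross term $\rho_r D\psi_r$ then emerges from the bracket $\langle\psi_s,\cdot\rangle_1$ in that product formula rather than from a discretization argument. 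The paper's route is self-contained --- it needs no a priori It\^o--Wentzell theorem, only the square-norm It\^o formula already required elsewhere in the paper --- whereas yours is shorter but conditional on citing Kunita's formula with hypotheses verified for the mollified field (continuity of the local characteristics in $C^2$, etc.), which you should make explicit. Your identification of the provenance of the $\rho D\psi$ term and of the role of the hypothesis $\psi\in\cL^2(0,T;H^{2,2}([0,b]))$ is accurate, and your limit passage matches the paper's.
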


By Sobolev's embedding theorem,  $H^{m,2}(\bR)$ is continuously embedded into continuous function space $C^{m-1}$. Thus, the equation \eqref{eq-ito-kunita} makes sense for each $x\in[0,b]$ and in this way, Lemma \ref{lem-ito-wentzell} is similar to the first formula of Kunita \cite[Pages 118-119]{kunita1981some} if we replace the bounded domain $[0,b]$ by the whole real line $\bR$. To eliminate the affects of the boundary of the bounded domain, we extend the Sobolev spaces to the whole line, and the sketch of the proof is provided in the appendix.  We would note that in Lemma \ref{lem-ito-wentzell}, we consider the one-dimensional case for simplicity and that there is no essential difficulty in extending it to multi-dimensional cases.

A result on the Dirichlet problem of BSPDEs is introduced below, whose proof is the same to that of \cite[Theorem 3.1]{DuTang2010} under our assumptions.
\begin{lem}\label{lem-DT}
Consider the following Dirichlet problem of BSPDE;
\begin{equation}\label{BSPDE-DP-linear-prime}
  \left\{\begin{array}{l}
  \begin{aligned}
  -dv_t(x)=\, &\left[ \frac{1}{2}\left( | \sigma_t|^2+|\bar\sigma_t|^2\right)D^2 v_t(x)
        + \sigma_t D \psi_t(x)+h_t(x)
         \right]\, dt  \\
        &\quad-\psi_t(x)\, dW_{t}, 
        \quad\quad \quad (t,x)\in [0,T]\times [0,b];\\
        v_t(0)=\,&0,\quad v_t(b)=0;\\
    v_T(x)=\,& G(x),  \quad x\in[0,b],
    \end{aligned}
  \end{array}\right.
\end{equation}
with $G\in L^2(\Omega,\sF_T; H^{1,2}_0([0,b]))$ and $h\in \cL^2(0,T;L^2([0,b]))$. Under assumptions $(\cA 0), \, (\cA 1)$ and $(\cA 2)$ {with $\sigma_t(0)=\sigma_t(b)=0$ a.s. for any $t\in[0,T]$}, BSPDE \eqref{BSPDE-DP-linear-prime} admits a unique \textit{weak} solution $(v,\psi)$ which is also the unique \textit{strong} solution with
\begin{align}
\|(v,\psi)\|_{\cH^1}\leq C \left(
\|G\|_{L^2(\Omega,\sF_T;H^{1,2}([0,b]))} + \|h\|_{\cL^2(0,T;L^2([0,b]))}
\right)
\end{align}
with the constant $C$s depending on $\kappa$, $K$ and $T$.
\end{lem}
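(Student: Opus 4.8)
The plan is to follow the scheme of \cite[Theorem 3.1]{DuTang2010}, verifying that assumptions $(\cA 0)$, $(\cA 1)$, $(\cA 2)$ together with the extra condition $\sigma_t(0)=\sigma_t(b)=0$ fit into that framework. The argument rests on three ingredients: (i) an a priori $\cH^1$-estimate for any strong solution, (ii) an existence proof by the method of continuity interpolating between the model Laplacian equation $-dv_t=[\tfrac12 D^2 v_t+h_t]\,dt-\psi_t\,dW_t$ and the full operator, and (iii) the identification of the weak and strong solutions. The decisive structural feature is that, in contrast to the Neumann problem discussed in Remark \ref{rmk-linear}, here It\^o's formula for the square norm is already legitimate at the level of weak solutions, which is what makes the whole Dirichlet theory run.

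I would first derive the first-order estimate by applying It\^o's formula to $\|v_t\|^2$. Integrating by parts in the term $\llangle v_s,(|\sigma_s|^2+|\bar\sigma_s|^2)D^2 v_s\rrangle$ and in the cross term $\llangle v_s,\sigma_s D\psi_s\rrangle$ produces a boundary contribution of the form $[v_s\sigma_s\psi_s]_0^b$, which vanishes because $v_s$ satisfies the zero Dirichlet condition and $\sigma_s$ vanishes at $\{0,b\}$; this is exactly the point that fails in the Neumann case. Super-parabolicity $(\cA 1)$, i.e.\ $|\bar\sigma|^2\ge\kappa$, then furnishes the coercivity needed to absorb the cross term, yielding $v\in\cS^2_{\sF}(0,T;L^2([0,b]))\cap\cL^2_{\sF}(0,T;H^{1,2}([0,b]))$ and $\psi\in\cL^2_{\sF}(0,T;L^2([0,b]))$, together with weak uniqueness coming from linearity.

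For the second-order bound I would repeat the energy computation on $\|Dv_t\|^2$, using that $(Dv,D\psi)$ satisfies the spatially differentiated equation, exactly in the spirit of the passage \eqref{gradient-0}--\eqref{gradient-2} of Proposition \ref{prop-apriori}. The new boundary term arising from integrating by parts the differentiated cross term is $[Dv_s\,\sigma_s\,D\psi_s]_0^b$; unlike $v$, the gradient $Dv$ need not vanish at the endpoints, so it is precisely the hypothesis $\sigma_t(0)=\sigma_t(b)=0$ that kills this term. Combining this with the coercive margin $\kappa-\eps K^2>0$ from $(\cA 1)$ and $(\cA 2)$ closes the estimate, upgrades the weak solution to a strong one in $\cH^1$, and gives the stated bound.

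The main obstacle, as in Proposition \ref{prop-apriori}, is the coupling in the second-order estimate between $\|D^2 v\|$ and $\|D\psi\|$ through the term $\sigma D\psi$: neither can be controlled in isolation. The resolution is that super-parabolicity lets the coercive quantity $\|\bar\sigma Dv\|^2$ dominate the cross term after an application of the Schwartz inequality with a small enough weight, while the vanishing of $\sigma$ on the boundary removes the only obstruction to integration by parts. With the a priori estimate in hand, uniform in the continuity parameter, existence then follows by continuation from the solvable Laplacian problem, and uniqueness is immediate from the estimate applied to the difference of two solutions.
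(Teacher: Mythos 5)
The paper does not actually prove this lemma --- it is quoted from \cite[Theorem 3.1]{DuTang2010} --- so your proposal is being measured against the method of that reference. Your overall architecture (uniform a priori $\cH^1$ bound, continuation from the solvable Laplacian model, uniqueness by linearity) is the right one, and your first-order estimate is sound: for the Dirichlet problem the boundary terms produced by integrating $\langle v,(|\sigma|^2+|\bar\sigma|^2)D^2v\rangle$ and $\langle v,\sigma D\psi\rangle$ by parts vanish because $v$ itself vanishes at $0$ and $b$, which is precisely the point contrasted with Remark \ref{rmk-linear}.

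The second-order step, however, has a genuine gap. Differentiating a Dirichlet problem does not yield another Dirichlet problem: $w=Dv$ carries no boundary condition, so the computation ``in the spirit of \eqref{gradient-0}--\eqref{gradient-2}'' (which there relies crucially on $v=Du$ vanishing at the endpoints) does not transfer. In the energy identity for $\|w_t\|^2$ the principal part contributes $\langle w, D(a Dw)\rangle=-\langle Dw, a Dw\rangle+\left[w\,a\,Dw\right]_0^b$ with $a=|\sigma|^2+|\bar\sigma|^2$, and the boundary term $\left[Dv\,(|\sigma|^2+|\bar\sigma|^2)\,D^2v\right]_0^b$ is \emph{not} removed by $\sigma_t(0)=\sigma_t(b)=0$: by $(\cA 1)$ one has $|\bar\sigma_t|^2\ge\kappa>0$ at the endpoints, and neither $Dv$ nor $D^2v$ vanishes there (indeed $D^2v\in L^2$ has no trace, so at the available regularity the term is not even well defined). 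The same defect appears in $\langle w,Dh\rangle$ for $h$ merely in $L^2$, whose pairing requires $w\in H^{1,2}_0$. The hypothesis on $\sigma$ disposes only of the cross-term contribution $\left[w\,\sigma\,D\psi\right]_0^b$, as you say, but that is not the only surviving boundary term. The standard repair --- and the route of \cite{DuTang2010}, mirroring the deterministic Dirichlet $H^2$ theory --- is to keep the equation undifferentiated and test it against $D^2v$ (equivalently, run Lemma \ref{lem-ito-formula} for $\|Dv_t\|^2$ in the Gelfand triple built on $H^{1,2}_0$, for smooth approximating data): the only boundary term is then $\left[Dv\cdot\bigl(\tfrac12 aD^2v+\sigma D\psi+h\bigr)\right]_0^b$, which vanishes because the condition $v_t(0)=v_t(b)\equiv0$ forces the entire drift and $\psi$ to vanish on the boundary, after which the cross term $\langle\sigma D\psi,D^2v\rangle$ is absorbed by the coercive margin $\kappa-\eps K^2$ exactly as in your final paragraph. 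Your existence and uniqueness steps are fine once the a priori bound is established in this corrected form.
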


\begin{proof}[Proof of Theorem \ref{thm-verification}]
 We first reduce the Neumann problem \eqref{BSPDE-NP} to the case with zero Neumann boundary condition.
In view of assumption (i) of $(\cA^*)$ and Definition \ref{defn-solution},  setting 
$$
\left(\hat{g}_t,\hat{\mathscr G}_t,\hat \cG_t\right)(x)=\int_0^x \left(  {g}_t, {\mathscr G}_t, \cG_t\right)(y)\,dy, \quad  \text{for each }\,(t,x)\in[0,T]\times[0,b],
$$
we have $(\hat g,\,\hat\cG)$ is the strong solution of the following BSPDE
\begin{equation}\label{BSPDE-NP-g}
  \left\{\begin{array}{l}
  \begin{aligned}
  -d\hat g_t(x)=\, &\left[ \frac{1}{2}\left( | \sigma|^2+|\bar\sigma|^2\right)D^2 \hat g_t(x)
        + \sigma D \hat \psi_t(x)+  \hat f_t(x)       \right]\, dt  \\
        &\quad -\hat \psi_t(x)\, dW_{t}, 
        \quad (t,x)\in [0,T]\times [0,b];\\
        D\hat g_t(0)=\,&g_t(0),\quad D \hat g_t(b)=g_t(b);\\
    \hat g_T(x)=\,& \int_0^xg_T(y)dy,  \quad x\in[0,b],
    \end{aligned}
  \end{array}\right.
\end{equation}
with $\hat f\in \cL^2(0,T;H^{1,2}([0,b]))$ being defined
$$\hat{f}_t(x) =   -\frac{1}{2}\left( | \sigma|^2+|\bar\sigma|^2\right)D^2 \hat g_t(x)
        - \sigma D \hat \psi_t(x) +\hat{\mathscr G}_t(x).$$
Thus, the existence and uniqueness of strong solution $(u,\psi)$ to BSPDE \eqref{BSPDE-NP} is equivalent to that of the strong solution $(\tilde{u},\tilde{\psi})=(u-\hat g,\psi-\hat\cG)$ to the following BSPDE:
\begin{equation}\label{BSPDE-NP-zero}
  \left\{\begin{array}{l}
  \begin{aligned}
  -d\tilde {u}_t(x)=\, &\bigg[ \frac{1}{2}\left( | \sigma|^2+|\bar\sigma|^2\right)D^2 \tilde{u}_t(x)
        + \sigma D \tilde \psi_t(x)\\
        &\quad +{\mathbb H} _t(x,D\tilde u_t(x)+D\hat g_t(x))-\hat{f}_t(x)
         \bigg]\, dt  \\
    &\quad    -\tilde \psi_t(x)\, dW_{t}, 
        \quad\quad (t,x)\in [0,T]\times [0,b];\\
        D\tilde u_t(0)=\,&0,\quad D\tilde u_t(b)=0;\\
    \tilde u_T(x)=\,& G(x)-\hat g_T(x),  \quad x\in[0,b].
    \end{aligned}
  \end{array}\right.
\end{equation}

By Theorem \ref{thm-Neumann-nonlinear}, BSPDE \eqref{BSPDE-NP-zero} has a unique strong solution $(\tilde u, \tilde \psi)$. Taking derivatives, one can easily check that 
$$(v,\zeta)\triangleq (D\tilde u, D\tilde \psi)=(Du-D\hat g,D\psi-D\hat\cG)=(Du- g,D\psi-\cG)$$
 is a weak solution of the following Dirichlet problem:
\begin{equation}\label{BSPDE-DP-zero}
  \left\{\begin{array}{l}
  \begin{aligned}
  -dv_t(x)=\, &\left[ \frac{1}{2}\left( | \sigma|^2+|\bar\sigma|^2\right)D^2 v_t(x)
        +\sigma D \zeta_t(x)+F_t(x)
         \right]\, dt  \\
    &\quad    -\zeta_t(x)\, dW_{t}, 
        \quad\quad (t,x)\in [0,T]\times [0,b];\\
        v_t(0)=\,&0,\quad v_t(b)=0;\\
    v_T(x)=\,& DG(x)- g_T(x),  \quad x\in[0,b],
    \end{aligned}
  \end{array}\right.
\end{equation}
with 
\begin{align*}
F_t(x)&=
- D\hat{f}_t(x)
        +\left(D\sigma \sigma' + D\bar\sigma\sigma'\right) D^2\tilde u_t(x) + D \sigma D\tilde \psi_t(x)
         \\
         &\quad+(D{\mathbb H} )_t(x,D\tilde u_t(x)+g_t(x)) \\
         &\quad+(\partial_v{\mathbb H} )_t(x,D\tilde u_t(x)+g_t(x)) \left(D^2\tilde u_t(x)+Dg_t(x)\right).
\end{align*}
By assumption (ii) of $(\cA^*)$, one has $F\in\cL^2(0,T;L^2([0,b]))$. Then, one can conclude from Lemma \ref{lem-DT} that  $(v,\zeta)$ turns out to be a strong solution. Thus,
$(D\tilde u, D\tilde \psi)=(v,\zeta)\in\cH^1$ and moreover, $(D u, D \psi)=(D\tilde u+g, D\tilde \psi+\cG)\in \cH^1$.  Hence, $(u,\psi)\in \cH^2$. This regularity and assumption (iii) of $(\cA^*)$ indicate the admissibility of the control $\theta^*_t=\Pi_t(X_t^*,u_t(X_t^*))$. For each admissible control $\theta$,  applying the generalized It\^o-Kunita-Wentzell formula to $u_t(X_t^{0,x;\theta})$ indicates that for each $x\in [0,b]$, there holds almost surely
\begin{align}
&u_t(X_t^{0,x;\theta})\nonumber\\
&=E\left[
\int_t^T\essinf_{\tilde \theta\in\Theta}\left\{ \beta_r(X_r^{0,x;\theta},\tilde \theta)Du_r(X_r^{0,x;\theta}) +f_r(X_r^{0,x;\theta},\tilde \theta)   \right\} 
\Big|\bar\sF_t\right]\nonumber\\
&\quad +E\left[ - \int_0^{T}\beta_r(X_r^{0,x;\theta_r},\theta_r)Du_r(X_r^{0,x;\theta})\,dr
+G(X_T^{0,x;\theta})  \big|\bar\sF_t\right] \nonumber  \\
&\leq E\left[G(X_T^{0,x;\theta})+
\int_t^T f_r(X_r^{0,x;\theta}, \theta_r)   \,dr
\Big|\bar\sF_t\right],\quad  \forall\,\,t\in[0,T].
\label{ineq-verification-prf-1}
\end{align}
Thus, for any admissible control $\theta$, it holds almost surely,
\begin{align}
u_t(X_t^{0,x;\theta})\leq J_t(X_t^{0,x;\theta};\theta), \quad \forall\,\,t\in[0,T].  \label{ineq-verification-proof}
\end{align}
On the other hand, in a similar way to \eqref{ineq-verification-prf-1}, for each $x\in [0,b]$,
\begin{align}
&u_t(X_t^{*})=E\left[G(X_T^{*})+
\int_t^T f_r(X_r^{*}, \theta^*_r)   \,dr
\Big|\bar\sF_t\right]=J_t(X_t^*;\theta^*)\label{eq-verification-prf}
\end{align}
holds for all $t\in[0,T]$ with probability 1. Hence, in view of  relations \eqref{ineq-verification-proof} and \eqref{eq-verification-prf}, $u_t(x)$ coincides with the value function and the optimal control is given by $\theta^*_t=\Pi_t(X^*_t,Du_t(X^*_t))$ with the optimal state process $X^*_t$ satisfying RSDE \eqref{state-proces-contrl-optimal}. We complete the proof.
\end{proof}

\begin{rmk}
It is worth noting that when the dimension is bigger than one, simply taking derivatives does not arrive at a Dirichlet problem for the gradients. In other words, the method used in the above proof can not be directly extended to multidimensional cases. 
\end{rmk}


\begin{appendix}

\section{An It\^o formula for the square norm of solutions of SPDEs}
Let $(V,\|\cdot\|_V)$ be a real reflexive and separable Banach space, and $H$ a real
separable Hilbert space. With a little notational confusion, the inner product and norm in $H$ is denoted by $\langle \cdot,\ \cdot\rangle $ and
$\|\cdot\|$ respectively. Assume that $V$ is densely and continuously imbedded in $H$. Thus, the dual space $H'$ is also
continuously imbedded in $V'$ which is the dual space of $V$. Simply, we denote the
above framework by
\begin{equation*}
  V\hookrightarrow H \cong H'\hookrightarrow V'.
\end{equation*}
We denote by $\|\cdot\|_*$ the norm in $V'$. The dual product between $V$
and $V'$ is denoted by $\langle \cdot,\ \cdot \rangle _{V',V}$.  $(V,H,V')$ is called a Gelfand triple.

The It\^o formula plays a crucial role in the theory of SPDEs (see \cite{Krylov_Rozovskii81,RenRocknerWang2007} for instance). In the following, we introduce a backward version; see \cite[Theorem 3.2]{QiuTangBDSDES2010} for the proof for general cases.

\begin{lem}\label{lem-ito-formula}
  Let $\xi\in  L^2(\Omega,\sF_T,H)$, $F\in \cL^2(0,T;V')$ and $(u,\psi)\in \cL^2(0,T;V)\times \cL^2(0,T;L(\bR^d,H))$ with $\left(L(\bR^d,H),\|\cdot\|_1,\langle\cdot,\,\cdot\rangle_1\right)$ being the space of Hilbert-Schmidt operators from $\bR^d$ to $H$. Assume that the following backward SDE
  \begin{equation}\label{BDSES trivialcase}
    u_t=\xi+\int_t^T F_s\ ds-\int_t^T \psi_s\, dW_s,\ t\in
    [0,T],
      \end{equation}
  holds in the weak sense, i.e., for any $\phi\in V$, it holds almost surely that
     \begin{equation*}
    \langle u_t,\,\phi\rangle =\langle \xi,\, \phi\rangle +\int_t^T  
    \langle F_s,\,\phi\rangle_{V',V}\ ds-\int_t^T \llangle \phi,\,\psi_s\ dW_s\rrangle,\ t\in
    [0,T].
  \end{equation*}
  Then we assert that $u\in \cS^2(0,T;H)$ and the following It\^o formula
  holds almost surely
  \begin{equation}\label{Ito formula square}
    \begin{split}
        \|u_t\|^2=\ &\|\xi\|^2+\int_t^T\left( 2\langle F_s,\ u_s\rangle_{V',V}
                    -\|\psi_s\|_1^2\right)\ ds
                    \\
                    &-\int_t^T2\llangle u_s,\ \psi_s\ dW_s\rrangle,\quad t\in[0,T].
    \end{split}
  \end{equation}
\end{lem}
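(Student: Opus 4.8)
The plan is to adapt the classical variational It\^o formula of Krylov--Rozovskii and Pardoux to the present backward setting. The guiding principle is that, although $u_s$ lies in $V$ only for almost every $s$ while the terminal datum $\xi$ belongs to the larger space $H$ and the coefficient $F_s$ to $V'$, the pairing $\la F_s,\,u_s\ra_{V',V}$ is well defined and, after integration, should reproduce the evolution of the squared $H$-norm up to the It\^o correction coming from the martingale part. First I would reduce to the case of regular data: by a density argument one approximates $(F,\xi)$ by $(F^k,\xi^k)$ with $F^k\in\cL^2(0,T;H)$ and $\xi^k\in L^2(\Omega,\sF_T;V)$, for which the corresponding solution $u^k$ enjoys enough additional regularity that every term below is classically meaningful. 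The formula for general data then follows by passing to the limit, using the formula itself---applied to the differences $u^k-u^\ell$---as the a priori estimate that forces convergence in $\cS^2(0,T;H)$.

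For fixed regular data I would use a Galerkin scheme. Fix an orthonormal basis $\{e_i\}_{i\ge 1}$ of $H$ contained in $V$ and chosen so that the orthogonal projections $P_n$ onto $\mathrm{span}\{e_1,\dots,e_n\}$ are uniformly bounded as operators on $V$ as well (for instance the eigenbasis of the self-adjoint operator realizing the embedding $V\hookrightarrow H$, as in the model case $V=H^{1}$, $H=L^{2}$). Testing the weak equation against each $e_i$ shows that $u^n\triangleq P_nu$ is a genuine finite-dimensional backward It\^o process,
\begin{equation*}
u^n_t=P_n\xi+\int_t^T P_nF_s\,ds-\int_t^T P_n\psi_s\,dW_s,
\end{equation*}
to which the elementary (finite-dimensional) It\^o formula applies and yields
\begin{equation*}
\|u^n_t\|^2=\|P_n\xi\|^2+\int_t^T\!\bigl(2\la F_s,\,u^n_s\ra_{V',V}-\|P_n\psi_s\|_1^2\bigr)\,ds-2\int_t^T\!\llangle u^n_s,\,P_n\psi_s\,dW_s\rrangle ,
\end{equation*}
where I have used the self-adjointness of $P_n$ to rewrite $\la P_nF_s,u^n_s\ra_H=\la F_s,u^n_s\ra_{V',V}$.

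It then remains to let $n\to\infty$. Here $P_n\xi\to\xi$ in $H$; since $\|P_n\psi_s\|_1\uparrow\|\psi_s\|_1$, the quadratic-variation term converges by monotone (hence dominated) convergence; and the stochastic integral converges in $L^2$ by the It\^o isometry together with the Burkholder--Davis--Gundy inequality, which also controls its supremum over $t$. The delicate term is the drift pairing, where one must identify $\lim_n\int_t^T\la F_s,u^n_s\ra_{V',V}\,ds$ with $\int_t^T\la F_s,u_s\ra_{V',V}\,ds$: this is exactly where the uniform $V$-boundedness of $P_n$ is used, guaranteeing $u^n_s=P_nu_s\to u_s$ in $V$ for a.e.\ $s$ and, via the bound $|\la F_s,u^n_s\ra_{V',V}|\le C\|F_s\|_*\|u_s\|_V\in L^1$, convergence of the integral. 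Because the right-hand side of the limiting identity is a continuous process in $t$, one simultaneously obtains a modification of $u$ in $\cS^2(0,T;H)$.

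I expect the main obstacle to be precisely this last point. The a priori information places $u_s$ in $V$ only for almost every $s$, so both the $H$-continuity of the trajectory $t\mapsto u_t$ and the passage to the limit in the $V'$--$V$ pairing must be extracted from the compatibility of the projections $P_n$ with the two topologies; controlling the cross term and producing a continuous $H$-valued version is the heart of the argument, and it is what turns the naive formal It\^o computation into a rigorous statement. Once this is in place for regular data, the reduction of the first paragraph closes the general case.
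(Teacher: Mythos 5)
The paper does not actually prove Lemma \ref{lem-ito-formula}: it is quoted with the proof deferred to \cite[Theorem 3.2]{QiuTangBDSDES2010}, which follows the classical Krylov--Rozovskii/Pardoux variational argument. So there is no in-paper proof to match, and your Galerkin-plus-approximation scheme is the standard family of arguments for results of this type. Two points in your sketch, however, do not close as written. First, the entire limit passage in the drift term rests on choosing an orthonormal basis of $H$ inside $V$ whose $H$-orthogonal projections $P_n$ are uniformly bounded on $V$. The lemma is stated for a general real reflexive separable \emph{Banach} space $V$, where no such basis need exist; this device is available only when $V$ carries a compatible Hilbert structure (as it does in the paper's application, $V=H^{k,2}([0,b])$, but not at the stated level of generality). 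The classical proofs avoid it entirely by working with partitions of $[0,T]$ and the identity $\|b\|^2-\|a\|^2=2\langle b-a,\,b\rangle-\|b-a\|^2$, expanding the increment through the equation at partition points where $u$ actually lies in $V$.

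Second, your reduction to regular data is circular in one place and superfluous in another. Given $(F^k,\xi^k)$ with $F^k\in\cL^2(0,T;H)$ and $\xi^k\in L^2(\Omega,\sF_T;V)$, the process $u^k_t=\xi^k+\int_t^T F^k_s\,ds-\int_t^T\psi_s\,dW_s$ is only $H$-valued (the stochastic integral of an $H$-valued integrand does not land in $V$), so $u^k\notin\cL^2(0,T;V)$ in general, and the formula cannot be ``applied to the differences $u^k-u^\ell$'' as an a priori estimate --- that application presupposes exactly the regularity you are trying to transfer. Conversely, if the uniformly $V$-bounded projections of your Galerkin step do exist, then the Galerkin argument already treats general $F\in\cL^2(0,T;V')$ directly (setting $P_nF=\sum_{i\le n}\langle F,e_i\rangle_{V',V}\,e_i$), and the first reduction is not needed at all. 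Finally, the $L^2$-convergence of the stochastic integrals and the Burkholder--Davis--Gundy control of their suprema require $E\int_0^T\|u_s\|^2\|\psi_s\|_1^2\,ds<\infty$, which is not implied by the hypotheses; one must localize with stopping times before passing to the limit. These are repairable, but each is a genuine missing step rather than a routine omission.
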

\begin{rmk}\label{rmk-ito-formula}
In Lemma \ref{lem-ito-formula}, suppose additionally $\tilde \xi\in  L^2(\Omega,\sF_T,H)$, $\tilde F\in \cL^2(0,T;V')$, $(\tilde u,\tilde \psi)\in \cL^2(0,T;V)\times \cL^2(0,T;L(\bR^d,H))$ and 
\begin{equation}\label{BDSES trivialcase-1}
    \tilde u_t=\xi+\int_t^T  \tilde F_s\ ds-\int_t^T \tilde \psi_s\, dW_s,\ t\in
    [0,T],
      \end{equation}
  holds in the weak sense. Then $\tilde u\in \cS^2(0,T;H)$, and applying the parallelogram rule yields that the following holds almost surely
  \begin{equation}\label{Ito formula square-pro}
    \begin{split}
        \llangle u_t,\,\tilde u_t\rrangle =\ &\llangle \xi,\,\tilde \xi\rrangle 
        +\int_t^T\left( \langle F_s,\,\tilde u_s\rangle_{V',V} + \langle \tilde F_s,\, u_s\rangle_{V',V}
                    -\langle \psi_s,\,\tilde\psi \rangle_1^2\right)\ ds\\
                    &\, -\int_t^T\llangle \tilde u_s,\ \psi_s\ dW_s\rrangle
                    -\int_t^T\llangle  u_s,\ \tilde \psi_s\ dW_s\rrangle,\quad \forall \, t\in[0,T].
    \end{split}
  \end{equation}
\end{rmk}

\section{Proof of Lemma \ref{lem-ito-wentzell}}

\begin{proof}[Sketch of the proof of Lemma \ref{lem-ito-wentzell}]
The Sobolev space theory allows us to  extend $H^{k,2}([0,b])$ to $H^{k,2}(\bR)$ for integers $k\geq 1$. In particular, when $k=1,2$, the bounded linear extension operator can be constructed (as in \cite[Pages 254-257]{Evans-1998-PDE}) as follows: for each $\zeta\in H^{1,2}([0,b])$ or $\zeta\in H^{2,2}([0,b])$,
\begin{equation*}
\mathcal E \zeta(x)\triangleq
  \left\{\begin{array}{l}
  \begin{aligned}
  &\zeta(x),  \,&&\text{if } x\in [0,b];  \\
        &\gamma(x)\left[ -3\zeta(-x)+4\zeta(-x/2) \right], \,&&\text{if }x\in [-b,0];\\
        &\gamma(x)\left[ -3\zeta(2b-x)+4\zeta((2b-x)/2)  \right],\, && \text{if }x\in[b,2b];\\
        &0,\, &&\text{if }x\in(-\infty,-b)\cup(2b,\infty),
    \end{aligned}
  \end{array}\right.
\end{equation*}
where $\gamma\in C_c^{\infty}(\bR)$ satisfying $\gamma(x)=1$ for $x\in[0,b]$ while $\gamma(x)=0$ for $x\in(-\infty,-b/2]\cup[3b/2,\infty)$. $\mathcal E \zeta$ is called an extension of $\zeta$ to $\bR$. 
   Then it is easy to check that
 \begin{align}
    \mathcal E u_t(x)=\mathcal Eu_0(x)+\int_0^t\!  \mathcal E q_r(x)\,dr+\int_0^t\mathcal E \psi_r(x)\,dW_r,\quad \textrm{for } (t,x)\in[0,T]\times\bR, \label{eq-lem-ito-kunita-1}
  \end{align}
holds in the weak sense.

Define   
\begin{equation}
    \phi(x)=
    \begin{cases}
      \tilde{c}e^{\frac{1}{x^2-1}}&\quad \textrm{if } |x|\leq 1;\\
      0 &\quad \textrm{otherwise};
    \end{cases}
    \quad \mbox{with} \quad \tilde{c}:=\left(\int_{-1}^1e^{\frac{1}{x^2-1}}\,dx\right)^{-1},
  \end{equation}
  and for $l\in\bN$, set $\phi_l(x)=l\phi(lx)$, $x\in\bR$. 
  It\^o's formula yields that, for each $y\in\bR$,
  \begin{align*}
    &\phi_l(X_t^{0,x}-y)\\
     & = \phi_l(x-y)+\int_0^t\!D\phi_l(X_r^{0,x}-y)\rho_r\,dW_r+\int_0^t D\phi_l(X_r^{0,x}-y)\bar\rho_r\,dB_r\\
    &\quad+
    \int_0^t \!  \left[D\phi_l(x_r^{s,x}-y)\xi_r +\frac{1}{2}\left( |\rho_r|^2+|\bar \rho_r|^2  \right)D^2\phi_l(x_r^{s,x}-y)\right] \,dr\\
    &\quad+
    \int_0^t D\phi_l(x_r^{s,x}-y) \,dA_r ,\quad t\in[s,T].
  \end{align*}
 In view of \eqref{eq-lem-ito-kunita-1}, we have by It\^o's formula of Remark \ref{rmk-ito-formula}, 
  \begin{align*}
      &  \int_{\bR}\!\!\phi_l(X_t^{0,x}-y) \mathcal E u_t(y)\,dy- \int_{\bR}\!\!\phi_l(x-y)\mathcal E u_0(y)\,dy\\
      &\quad\quad-\int_0^t\!\!\int_{\bR}\! D \mathcal E u_r(y) \phi_l(X_r^{0,x}-y)\,dy dA_r
      \\
    &=
   \int_0^t\!\!\int_{\bR}\!
      \Big[D\mathcal E u_r(y)\xi_r
      +\mathcal E q_r(y)  + \frac{1}{2}\left( |\rho_r|^2+|\bar \rho_r|^2  \right)D^2\mathcal E u_r(y) 
      \\
      &\quad\quad\quad \quad
      +\rho_rD \mathcal E u_r(y) \Big]\phi_l(X_r^{0,x}-y)\,dydr\\
    &\quad +\int_0^t\!\!\int_{\bR}\!\phi_l(X_r^{0,x}-y)\left( \rho_rD\mathcal E u_r(y)+\psi_r(y)\right)\,dydW_r
      \\
      &\quad +\int_0^t\!\!\int_{\bR}\!\phi_l(X_r^{0,x}-y)\bar \rho_rD \mathcal E u_r(y)\,dydB_r,
  \end{align*}
a.s. for all $t\in[0,T]$. We note that as $X^{s,x}_{\cdot}\in\cS_{\bar{\sF}}^2(0,T;[0,b])$, all the above integrals on $\bR$ are taken on a compact set for almost every $\omega\in\Omega$ and thus make sense. Since the sequence of convolutions indexed by $l$ approximates to the identity and $0\leq X_t\leq b$ a.s., letting $l\rightarrow \infty$ and recalling that $\mathcal E u_t(x)=u_t(x)$ for $(t,x)\in[0,T]\times[0,b]$, we obtain that for each $x\in[0,b]$, it holds almost surely that, for all $t\in[0,T]$,
 \begin{align*}
     &u_t(X^{0,x}_t) 
     \\
     &= u_0(x)
    +\!\int_0^t   \bigg[ q_r(X^{0,x}_r) +\xi_rDu_r(X^{0,x}_r)
    +\frac{1}{2}\left(|\rho_r|^2+|\bar\rho_r|^2\right)D^2u_r(X^{0,x}_r)\\
    &\quad\quad\quad\quad\quad+  \rho_r D\psi_r(X^{0,x}_r)  \bigg]\,dr     
     +\int_0^t Du_r(X^{0,x}_r)\,dA_r
        \\
   &\quad
     +\int_0^t\left( \psi_r(X^{0,x}_r)    + Du_r(X^{0,x}_r)\rho_r\right)\,dW_r
  +\int_0^t Du_r(X^{0,x}_r)\bar\rho_r\,dB_r.
   \end{align*}
\end{proof}

\end{appendix}


\bibliographystyle{siam}
\bibliography{ref_qjn}

\end{document}